\theoremstyle{thmstyleone}%
\newtheorem{theorem}{Theorem}
\newtheorem{lemma}{Lemma}
\newtheorem{corollary}{Corollary}
\theoremstyle{thmstyletwo}%
\theoremstyle{thmstylethree}%
\begin{document}

\title[Consistency of Bayes factors for linear models]{Consistency of Bayes factors for linear models}


\author*[1]{\fnm{El{\'\i}as} \sur{Moreno}}\email{emoreno@ugr.es}

\author[2]{\fnm{Juan J.} \sur{Serrano-P{\'e}rez}}\email{jjserra@ugr.es}

\author[2]{\fnm{Francisco} \sur{Torres-Ruiz}}\email{fdeasis@ugr.es}

\affil*[1]{\orgname{Royal Academy of Sciences}, \orgaddress{\country{Spain}}}


\affil[2]{\orgdiv{Department of Statistic}, \orgname{University of Granada}, \orgaddress{
\country{Spain}}}



\abstract{The quality of a Bayes factor crucially depends on the number of regressors, the sample
size and the prior on the regression parameters, and hence it has to be established in a
case-by-case basis.

In this paper we analyze the consistency of a wide class of Bayes factors when the number of
potential regressors grows as the sample size grows.

We have found that when the number of regressors is finite some classes of priors yield
inconsistency, and\ when the potential number of regressors grows at the same rate than the sample
size different priors yield different degree of inconsistency.

For moderate sample sizes, we evaluate the Bayes factors by comparing the posterior model
probability. This gives valuable information to discriminate between the priors for the model
parameters commonly used for variable selection.}

\keywords{Asymptotic, Bayes factors for linear models, complex linear models, intrinsic priors,
mixtures of $g$-priors}



\maketitle

\section{Introduction}\label{sec1}

Bayesian variable selection in linear model is based on the posterior probability of a given set
of candidate models, and, for convenience, these posterior probabilities are defined in terms of
Bayes factors that compare nested models: a generic normal regression model $M_{p}$ with $p\geq 1$
regressors against the intercept only model $M_{0}$ (encompassing from below).

The Bayes factor introduced by \cite{Jeffreys1961}, the main statistical tool for model selection
as it contains all the data information for model comparison, crucially depends on the number of
regressors and the prior distribution of the regression parameters and, to a lesser extent, on the
prior of the variance error. A recent review of priors for model selection is given in
\cite{Consonni2018}.

In this paper we deal with a set of objective and subjective Bayes factors, the former are defined
by objective intrinsic priors and the others by subjective mixtures of the Zellner's g-prior. All
of them have in common their dependency on the data through the ratio of the sum of squared of the
residuals of the models, the sufficient statistic, the number of regressors $p$ and the sample
size $n$. They certainly differ in the way the sufficient statistic is corrected by $(p,n)$.

The intrinsic priors were introduced by \cite{Berger1996} to justify the arithmetic intrinsic
Bayes factor, an empirical tool for model selection based on the notion of partial Bayes factor
\citep{Leamer1983}. These priors
were studied as priors for model selection in \cite{Moreno1997} and \cite%
{Moreno1998}, and they have been applied to variable selection in regression by \cite{Moreno2003},
\cite{Moreno2005a}, \cite{Moreno2005b,Moreno2008}, \cite{Casella2006}, \cite{Giron2006},
\cite{Giron2010}, \cite{Torres2011}, \cite{Kang2021}, among others.

On the other hand, the $g$-prior was introduced by \cite{Zellner1980} and since then a
considerable number of mixtures of the $g$-prior for model selection have
been proposed including those by \cite{Zellner1984}, \cite{Fernandez2001}, \cite%
{Cui2008}, \cite{Liang2008}, \cite{Maruyama2011}, \cite{Bayarri2012} and \cite{Giron2021}.

We analyze here the asymptotic of six Bayes factors for comparing the generic model $M_{p}$
against $M_{0}$. We remark that the inconsistency of a Bayes factor implies the inconsistency of
the posterior model probability in the set of candidate models \citep{Moreno2015}. Thus, an
inconsistent Bayes factor should be rejected for variable selection.

When the number of regressors $p$ is either finite or grows at rate $p=O(n^{b})$, $0<b<1$, the
Bayes factors that use either intrinsic priors or conventional mixtures of $g$-priors
\citep{Zellner1984, Fernandez2001} are consistent having the same rate of convergence than that of
the BIC approximation. However, Bayes factors for others mixtures of $g$-priors are inconsistent
when sampling from $M_{0}$. Further when $p$ grows at the same rate than the sample size $n$,
$p=O(n)$, as occurs in clustering or the analysis of variance, everyone is inconsistent when
sampling from specific sets of models. These sets are given and compared each other.

For finite sample size $n$ and for each Bayes factor the posterior probability of $M_0$ in the set
$\{M_{0}, M_{p}\}$ as a function of the sufficient statistic provide relevant information on the
entertained Bayes factors.

The rest of the paper is organized as follows. In Section \ref{mod} the model notation is
presented. In Section \ref{priorsBF} the set of priors for the parameters of $M_{0}$ and $M_{p}$
and the Bayes factors are given. Section \ref{asymp} presents the asymptotic analysis and Section
\ref{pprob} the posterior model probability for finite sample size. Section \ref{conc} contains
concluding remarks.

\section{The models}\label{mod}

Let $Y$ represent the response random variable and $\mathbf{x}=(x_{1},\ldots ,x_{p})$ a vector of
explanatory deterministic regressors related through the normal linear model
$$
Y=\beta_{0}+\beta_{1}x_{1}+\ldots +\beta_{p}x_{p}+\varepsilon_{p},
$$
where $\boldsymbol{\beta}_{p+1}=(\beta_{0},\beta_{1},\ldots ,\beta_{p})^{\prime}$ is the vector of
regression coefficients, $\varepsilon_{p}\sim N(0,\sigma_{p}^{2})$ and $\sigma_{p}^{2}$ is the
variance error. The sampling distribution of $Y$ is assumed to be either the normal distribution
with $p$ regressors $N(y\mid \mathbf{x} \boldsymbol{\beta}_{p+1},\sigma _{p}^{2})$ or the
intercept only model $N(y\mid \alpha_{0},\sigma_{0}^{2})$. \smallskip

Let $(\mathbf{y},\mathbf{X})$ be the data set, where $\mathbf{y}$ is a vector of $n$ independent
observations of $Y$, and $\mathbf{X}$ a $n\times (p+1)$ design matrix of full rank. The likelihood
of $(\boldsymbol{\beta}_{p+1},\sigma_{p})$ is given by $N_{n}(\mathbf{y\mid
X}\boldsymbol{\beta}_{p+1},\sigma_{p}^{2}\mathbf{I}_{n})$, and for a given prior distribution $\pi
(\boldsymbol{\beta}_{p+1},\sigma_{p})$ the Bayesian model is denoted as
$$
M_{p}:\{N_{n}(\mathbf{y\mid X}\boldsymbol{\beta}_{p+1},\sigma_{p}^{2} \mathbf{I}_{n}),\pi
(\boldsymbol{\beta}_{p+1},\sigma_{p})\}.
$$
Likewise, the likelihood of the parameter of the intercept only model $(\alpha_{0},\sigma_{0})$ is
$N_{n}(\mathbf{y}\mid \alpha_{0}\mathbf{1}_{n},\sigma _{0}^{2}\mathbf{I}_{n})$, and for a given
prior distribution $\pi (\alpha_{0},\sigma_{0})$ the Bayesian model is
$$
M_{0}:\{N_{n}(\mathbf{y}\mid \alpha_{0}\mathbf{1}_{n},\sigma_{0}^{2} \mathbf{I}_{n}),\pi
(\alpha_{0},\sigma_{0})\}.
$$

\noindent Sometimes the models are simplified assuming that the variances of $M_{0}$ and $M_{p}$
are equal.

\section{Priors for the model parameters and Bayes factors}\label{priorsBF}

\subsection{Intrinsic priors and Bayes factor for heteroscedastic models}

\cite{Moreno2003} and \cite{Giron2006} used the methodology in \cite{Berger1996} and
\cite{Moreno1998} for constructing the intrinsic priors for the parameters of the normal linear
models from the improper reference priors \citep{Berger1992}.

The intrinsic prior for the parameters $(\boldsymbol{\beta}_{p+1},\sigma_{p})$ of $M_{p}$,
conditional on $(\alpha_{0},\sigma_{0})$, turns out to be \citep{Moreno2003}
$$
\pi^{IP}(\boldsymbol{\beta}_{p+1},\sigma_{p}\mid \alpha_{0},\sigma_{0}) = N\!\left(
\boldsymbol{\beta}_{p+1} \mid \boldsymbol{\tilde{\alpha}}_{0},
\tfrac{n(\sigma_{p}^{2}+\sigma_{0}^{2})}{p+2}(\mathbf{X}^{\prime} \mathbf{X})^{-1} \right)
\smallskip \,H\!C^{+}(\sigma_{p}\,|\,0,\sigma_{0}),
$$
where $\boldsymbol{\tilde{\alpha}}_{0}=(\alpha_{0},\mathbf{0}_{p}^{\prime})^{\prime}$, and
$H\!C^{+}(\sigma_{p}\mid 0,\sigma_{0})$ is the half Cauchy distribution on the positive part of
the real line with location at $0$ and scale $\sigma_{0}$. Thus, the unconditional intrinsic prior
for $(\boldsymbol{\beta}_{p+1},\sigma_{p})$ is given as
$$
\pi^{IP}(\boldsymbol{\beta}_{p+1},\sigma_{p})= c \ {\int\nolimits_{0}^{\infty} \negthinspace
\negthinspace \int\nolimits_{-\infty}^{\infty} \negthinspace N\!\left( \boldsymbol{\beta}_{p+1}
\mid
\boldsymbol{\tilde{\alpha}}_{0},\tfrac{n(\sigma_{p}^{2}+\sigma_{0}^{2})}{p+2}(\mathbf{X}^{\prime}
\mathbf{X})^{-1}\right)}\,H\!C^{+}(\sigma_{p}\,|\,0,\sigma_{0})\,\tfrac{1}{\sigma_{0}} \,
d\alpha_{0} \, d\sigma_{0}.
$$

This is an improper prior although the Bayes factor of $M_{p}$ against $M_{0}$ for
$(\pi^{N}(\alpha_{0},\sigma_{0})$, $\pi^{IP}(\boldsymbol{\beta}_{p+1},\sigma_{p}))$ is
well-defined as the arbitrary constant $c$ that appears in $\pi^N(\alpha_0,\sigma_0)$ cancels out
in the ratio. \smallskip

For a sample $\mathbf{(y,X)}$ from either model $M_{p}$ or $M_{0}$, the Bayes factor for the
intrinsic priors $\big(\pi^{N}(\alpha_{0},\sigma_{0}),
\pi^{IP}(\boldsymbol{\beta}_{p+1},\sigma_{p}) \big)$ is given by
\begin{equation}
B_{p0}^{IP}(\mathbf{y},\mathbf{X})=\frac{(p+2)^{p/2}}{\pi/2} \int_{0}^{\pi /2}\!\frac{\sin
^{p}\!\raisebox{2pt}{\fontsize{9pt}{11pt} \selectfont$\varphi$}\,[n+(p+2)\sin
^{2}\raisebox{2pt}{\fontsize{9pt}{11pt}\selectfont$\varphi$}]^{(n-p-1)/2}}{[(p+2)\sin
^{2}\!\raisebox{2pt}{\fontsize{9pt}{11pt}\selectfont$\varphi$}+n\mathcal{B}_{p0}]^{(n-1)/2}}\,d
\raisebox{2pt}{\fontsize{9pt}{11pt}\selectfont$\varphi$},  \label{IPBF}
\end{equation}
where
$$
\mathcal{B}_{p0}=\frac{\mathbf{y}^{\prime }(\mathbf{I}_{n}-\mathbf{H})
\mathbf{y}}{\mathbf{y}^{\prime }(\mathbf{I}_{n}-\frac{1}{n}\mathbf{1}_{n} \mathbf{1}_{n}^{\prime
})\mathbf{y}},
$$
and $\mathbf{H}=\mathbf{X}(\mathbf{X}^{\prime }\mathbf{X})^{-1}\mathbf{X}^{\prime}$. The integral
in (\ref{IPBF}) does not have a closed form expression and needs numerical integration.

\subsection{Intrinsic prior and Bayes factor for homoscedastic models}

If the variance $\sigma^{2}$ is assumed to be a common parameter of the models, the intrinsic
prior for $(\boldsymbol{\beta}_{p+1},\sigma)$ can be shown to be
\[
\pi^{IPH}(\boldsymbol{\beta}_{p+1}|\alpha _{0},\sigma ) = N_{p+1} \left( \boldsymbol{\beta}_{p+1}|
\boldsymbol{\tilde{\alpha}}_{0}, \frac{2n}{p+1} \sigma^{2} (\mathbf{X}_{p+1}^{\prime}
\mathbf{X}_{p+1})^{-1} \right),
\]
and the Bayes factor of $M_{p}$ against $M_{0}$ for the priors $\big( \pi^{N}(\alpha_{0},\sigma),
\pi^{IPH}(\boldsymbol{\beta}_{p+1},\sigma) \big)$ turns out to be
$$
B_{p0}^{IPH}(\mathbf{y},\mathbf{X})=\frac{\left( 1+\dfrac{2n}{p+1}\right)^{(n-p-1)/2}}{\left(
1+\dfrac{2n}{p+1}\,\mathcal{B}_{p0}\right)^{(n-1)/2}}.
$$

\subsection{Mixtures of the Zellner's $g$-prior and Bayes factors}

The stream of research that compares $M_{p}$ and $M_{0}$ using mixtures of $g$-prior simplifies
the sampling models assuming that the intercept $\beta_{0}$ and the variance error $\sigma^{2}$
are common parameters. The prior for $(\beta_{0},\sigma)$ is assumed to be the improper reference
prior
$$
\pi ^{N}(\beta_{0},\sigma)=\frac{c}{\sigma},
$$
where $c$ is an arbitrary positive constant, and the prior distribution for the rest of the
regression coefficients $\boldsymbol{\beta}_{p}=(\beta_{1}, \ldots,\beta_{p})^{\prime }$ of
$M_{p}$\smallskip is assumed to be the Zellner's $g$-prior $N(\boldsymbol{\beta}_{p} \mid
\mathbf{0}_{p}, g\sigma^{2}(\mathbf{X}_{p}^{\prime} \mathbf{X}_{p})^{-1})$, where $g$ is a unknown
positive hyperparameter and $\mathbf{X}_{p}$ is the matrix obtained from the original $\mathbf{X}$
by suppressing their first column.

\subsubsection{The Zellner-Siow's mixture of $g$-prior}

\cite{Zellner1980} implicitly assumed for the hyperparameter $g$ the Inverse Gamma distribution
$$
\pi^{ZS}(g \,|\, n)=\frac{(n/2)^{1/2}}{\Gamma (1/2)}\,g^{\raisebox{1.5pt}{$\scriptstyle
-3/2$}}exp\left( -n/(2g)\right) ,
$$
and hence the prior for the parameters $(\beta_{0},\boldsymbol{\beta}_{p},\sigma)$ of model
$M_{p}$ is the mixture
$$
\pi^{ZS}(\beta_{0},\boldsymbol{\beta}_{p},\sigma )=\displaystyle{\frac{c}{
\sigma}\,\int_{0}^{\infty }N(\boldsymbol{\beta }_{p}\mid \mathbf{0}_{p},g\sigma
^{2}(\mathbf{X}_{p}^{\prime}\mathbf{X}_{p})^{-1})}\,\pi ^{ZS}(g \,|\, n)\,dg.
$$
This is an improper prior whose arbitrary constant $c$ is that of
$\pi^{N}(\alpha_{0},\sigma_{0})$, and therefore the Bayes factor of $M_{p}$ against $M_{0}$ for
$\big( \pi^{N}(\beta_{0},\sigma), \pi^{ZS}(\beta_{0}, \boldsymbol{\beta}_{p},\sigma) \big)$ is
well-defined and it turns out to be
\begin{equation}
B_{p0}^{ZS}(\mathbf{y},\mathbf{X})=\frac{(n/2)^{1/2}}{\Gamma (1/2)} \int_{0}^{\infty}
\frac{(1+g)^{(n-p-1)/2}}{(1+g\mathcal{B}_{p0})^{(n-1)/2}}\,g^{\raisebox{1.5pt}{$\scriptstyle
-3/2$}}\,exp\left( -n/(2g)\right) dg. \label{ZSBF}
\end{equation}

\noindent The integral in (\ref{ZSBF}) does not have a closed form expression.

\subsubsection{A degenerate mixtures of $g$-prior} \smallskip

Several alternative priors to the distribution $\pi^{ZS}(g)$ have been considered. \smallskip The
simplest one is the degenerate prior $\pi^{FS}(g \,|\, n)=1_{(n)}(g)$ \smallskip suggested by
\cite{Zellner1984} and further studied by \cite{Fernandez2001}. For $\pi^{FS}(g \,|\, n)$ the
prior for the parameter $(\beta_{0},\boldsymbol{\beta}_{p},\sigma)$ of model $M_{p}$ is given by
$$
\pi^{FS}(\beta_{0},\boldsymbol{\beta}_{p},\sigma)=\frac{c}{\sigma} \, N(\boldsymbol{\beta}_{p}\mid
\mathbf{0}_{p},n\sigma^{2}(\mathbf{X}_{p}^{\prime }\mathbf{X}_{p})^{-1}).
$$
The Bayes factor of $M_{p}$ against $M_{0}$ for $\big(\pi^{N}(\beta_{0},\sigma),
\pi^{FS}(\beta_{0},\boldsymbol{\beta}_{p},\sigma)\big)$ is well-defined and it turns out to be
$$
B_{p0}^{FS}(\mathbf{y},\mathbf{X})=\frac{(1+n)^{(n-p-1)/2}}{(1+n\mathcal{B}_{p0})^{(n-1)/2}}.
$$

\subsubsection{The prior $\pi^{L}(g \,|\, n)$}

A more sophisticated prior for $g$ than the degenerate one was introduced by \cite{Liang2008} and
is given by
$$
\pi ^{L}(g)=\frac{1}{2}(1+g)^{-3/2}.
$$
However, for this prior the Bayes factor
$$
B_{p0}^{\pi^{L}(g)}(\mathbf{y},\mathbf{X})=\frac{1}{2}\int_{0}^{\infty}
\frac{(1+g)^{(n-p-1)/2}}{(1+g\mathcal{B}_{p0})^{(n-1)/2}}\,\left( 1+g \right)^{-3/2}dg
$$
is inconsistent, its limit in probability is infinity when sampling from the model $M_{0}$. This
drawback of $\pi(g)$ motivated to \cite{Liang2008} to introduce the prior
$$
\pi^{L}(g \,|\, n)=\frac{1}{2n}(1+g/n)^{-3/2}
$$
that depends on $n$. Then, for $\pi^{L}(g \,|\, n)$ the prior for $(\beta_{0},
\boldsymbol{\beta}_{p},\sigma)$ is given by
$$
\pi^{L}(\beta_{0},\boldsymbol{\beta}_{p},\sigma)=\displaystyle{\frac{c}{2n\sigma
}\int_{0}^{\infty}N_{p}(\boldsymbol{\beta}_{p}\mid \mathbf{0}_{p},g\sigma
^{2}(\mathbf{X}_{p}^{\prime}\mathbf{X}_{p})^{-1})}\ \left( 1+g/n \right)^{\!-3/2}dg.
$$
For $\big(\pi^{N}(\beta_{0},\sigma),\pi^{L}(\beta_{0},\boldsymbol{\beta}_{p},\sigma)\big)$ the
Bayes factor of $M_{p}$ against $M_{0}$ is
\begin{equation}
B_{p0}^{L}(\mathbf{y},\mathbf{X})=\frac{1}{2n}\int_{0}^{\infty}\!\frac{(1+g)^{(n-p-1)/2}}{(1+g\mathcal{B}_{p0})^{(n-1)/2}}\left(
1+g/n \right)^{\!-3/2}dg. \label{LBF}
\end{equation}

\noindent The integral in (\ref{LBF}) does not have a closed form expression.

\subsubsection{The prior $\pi ^{CG}(g \,|\, n)$}

A quite close prior to the preceding one was introduced by \cite{Cui2008} and its expression is
$$
\pi^{CG}(g \,|\, n) = (1+g)^{-2}.
$$
The Bayes factor for this prior turns out to be
\begin{equation}
B_{p0}^{CG}(\mathbf{y},\mathbf{X}) = \int_{0}^{\infty}
\frac{(1+g)^{(n-p-1)/2}}{(1+g\mathcal{B}_{p0})^{(n-1)/2}} \, (1+g)^{-2} dg. \label{CGBF}
\end{equation}
The integral in (\ref{CGBF}) does not have a close form expression.

\subsubsection{The prior $\pi^{B}(g \,|\, n,p)$} \smallskip

Later on \cite{Bayarri2012} also corrected the inconsistency of \smallskip the Bayes factor for
the prior $\pi(g) = \frac{1}{2}(1+g)^{-3/2}$ truncating this prior to the subset of the\smallskip
real line $\{g:g
> (1+n)/(1+p)-1\}$. The resulting prior for $g$ depends on $n$ and $p$ and is given by
$$
\pi^{B}(g \,|\, n,p) = \frac{1}{2} \, \left( \frac{1+n}{1+p} \right)^{1/2} (1+g)^{-3/2} \,
1_{\left( \frac{1+n}{1+p}-1,\infty \right)}(g).
$$
Thus, the prior for $(\beta_{0},\boldsymbol{\beta}_{p},\sigma)$ is
$$
\pi^{B}(\beta_{0},\boldsymbol{\beta}_{p},\sigma) = \dfrac{c}{2 \, \sigma} \,
\left( \dfrac{1+n}{1+p} \right)^{1/2} \int_{\frac{1+n}{1+p}-1}^{\infty} N(%
\boldsymbol{\beta}_{p} \mid \mathbf{0}_{p}, g\sigma^{2}(\mathbf{X}%
_{p}^{\prime} \mathbf{X}_{p})^{-1}) \, (1 + g)^{-3/2} dg ,
$$
and the Bayes factor of $M_{p}$ against $M_{0}$ for $\big(\pi^{N}(\beta_{0}, \sigma),
\pi^{B}(\beta_{0},\boldsymbol{\beta}_{p},\sigma) \big)$ is
\begin{equation}
B_{p0}^{B}(\mathbf{y},\mathbf{X}) = \frac{1}{2} \left( \frac{1 + n}{1 + p} \right)^{1/2}
\int_{\frac{1 + n}{1 + p} - 1}^{\infty} \frac{(1 + g)^{(n-p-1)/2}}{(1 +
g\mathcal{B}_{p0})^{(n-1)/2}} \left(1 + g \right)^{-3/2} dg. \label{BBF}
\end{equation}

\noindent The integral in (\ref{BBF}) does not have a closed form expression.

\subsubsection{The robust g-prior class}

We note that the priors $\pi^{L}(g)$, $\pi^{CG}(g \,|\, n)$ and $\pi^{B}(g \,|\, n,p)$ are
particular cases of the ``robust'' g-prior class proposed by \cite{Bayarri2012}. This class was
defined as
\begin{equation}
\pi^{R}(g \,|\ a,d,\rho) = a [\rho (d+n)]^{a} (g+d)^{-(a+1)} \mathbf{1}_{\{g > \rho (d+n) -
d\}}(g) \label{RBF}
\end{equation}
for $a,d>0$ and $\rho \geq d/(d+n)$.

\section{Asymptotic}\label{asymp}

Let us assume that $p=O(n^{b})$ for $0 \leq b \leq 1$. To facilitate the reading of this section
we bring here some auxiliary results on the asymptotic distribution of the sampling statistic
$\mathcal{B}_{p0}$ when $p$ grows with $n$. We also give a lower bound of the Bayes factor
$B_{p0}^{L}$ and an approximation to $B_{p0}^{B}$. These tools are given in Lemma \ref{AdistrBp0}
and \ref{LBnLarge}. The limit in probability of a random sequence when sampling from model $M$ is
denoted as $[P_{M}]$. \medskip

\begin{lemma} \label{AdistrBp0}
For $p=O(n^{b})$ with $0 \leq b \leq 1$ and the pseudo-distance between a sampling model in
$M_{p}$ and $M_{0}$
$$
\delta_{p0} = \frac{1}{2\sigma_{p}^{2}} \, \frac{\boldsymbol{\beta}_{p+1}^{\prime}
\mathbf{X}^{\prime} \left( \mathbf{H} - \frac{1}{n} \mathbf{1}_{n}^{\phantom{\prime}}
\mathbf{1}_{n}^{\prime} \right) \mathbf{X} \boldsymbol{\beta}_{p+1}}{n} ,
$$
we have that \medskip
\begin{enumerate}[i)]
\item for $0\leq b<1$,
$$
\lim_{n\rightarrow \infty }\mathcal{B}_{p0} = \left\{
\begin{array}{ll}
1, & [P_{M_{0}}], \\
(1 + \delta)^{-1}, & [P_{M_{p}}],
\end{array}
\right.
$$
where $\thinspace \delta = \lim_{n \rightarrow \infty} \delta _{p0} \geq 0$, \medskip

\item for $b=1$ and $\thinspace r = \lim_{n \rightarrow \infty} n/p > 1$,
$$
\lim_{n\rightarrow \infty }\mathcal{B}_{p0} = \left\{
\begin{array}{ll}
1-1/r, & [P_{M_{0}}], \\
(1-1/r)(1+\delta )^{-1}, & [P_{M_{p}}].
\end{array}
\right.
$$
\end{enumerate}
\end{lemma}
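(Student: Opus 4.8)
The plan is to exploit the orthogonal-projection structure hidden in $\mathcal{B}_{p0}$. Write $\mathbf{J}=\tfrac1n\mathbf{1}_n\mathbf{1}_n^{\prime}$ for the projector onto $\mathrm{span}(\mathbf{1}_n)$. Since $\mathbf{1}_n$ is the first column of $\mathbf{X}$, it lies in the column space of $\mathbf{X}$, so $\mathbf{H}\mathbf{J}=\mathbf{J}$ and the centering projector factors as $\mathbf{I}_n-\mathbf{J}=(\mathbf{I}_n-\mathbf{H})+(\mathbf{H}-\mathbf{J})$ into two orthogonal idempotents of ranks $n-p-1$ and $p$. Setting $Q_1=\mathbf{y}^{\prime}(\mathbf{I}_n-\mathbf{H})\mathbf{y}$ and $Q_2=\mathbf{y}^{\prime}(\mathbf{H}-\mathbf{J})\mathbf{y}$, this gives the exact identity $\mathcal{B}_{p0}=Q_1/(Q_1+Q_2)$, so the whole problem reduces to the probability limits of $Q_1/n$ and $Q_2/n$ under each sampling model.

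Next I would identify the laws of $Q_1$ and $Q_2$. Under $M_0$, $\mathbf{y}=\alpha_0\mathbf{1}_n+\boldsymbol{\varepsilon}$ and both $\mathbf{I}_n-\mathbf{H}$ and $\mathbf{H}-\mathbf{J}$ annihilate $\mathbf{1}_n$, so $Q_1\sim\sigma_0^2\chi^2_{n-p-1}$ and $Q_2\sim\sigma_0^2\chi^2_{p}$, independent by orthogonality of the projectors (Cochran/Craig). Under $M_p$, $\mathbf{y}=\mathbf{X}\boldsymbol{\beta}_{p+1}+\boldsymbol{\varepsilon}$; because $(\mathbf{I}_n-\mathbf{H})\mathbf{X}=\mathbf{0}$ the form $Q_1$ is still central, $Q_1\sim\sigma_p^2\chi^2_{n-p-1}$, whereas $Q_2\sim\sigma_p^2\chi^2_{p}(\lambda)$ is noncentral with $\lambda=\boldsymbol{\beta}_{p+1}^{\prime}\mathbf{X}^{\prime}(\mathbf{H}-\mathbf{J})\mathbf{X}\boldsymbol{\beta}_{p+1}/\sigma_p^2$. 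By the very definition of the pseudo-distance this noncentrality is proportional to $n\,\delta_{p0}$, which is the algebraic link that brings $\delta_{p0}$ into the limit.

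I would then pass to limits by a law-of-large-numbers argument for chi-squares: for degrees of freedom $k=k(n)\to\infty$ one has $\chi^2_k/k\to1$ in probability, and for fixed $k$ the variable $\chi^2_k$ is $O_p(1)$; for the noncentral piece I would combine the moments $\mathbb{E}[Q_2]=\sigma_p^2(p+\lambda)$ and $\mathrm{Var}(Q_2)=2\sigma_p^4(p+2\lambda)$ with Chebyshev, the bound $\mathrm{Var}(Q_2)/n^2\to0$ being exactly what forces $Q_2/n$ to concentrate at its mean even when $p$ and $\lambda$ grow with $n$. Under $M_0$ this yields $Q_1/n\to\sigma_0^2\lim(n-p-1)/n$ and $Q_2/n\to\sigma_0^2\lim(p/n)$ in probability, while under $M_p$ the mean of $Q_2/n$ picks up an extra term governed by the limiting pseudo-distance $\delta=\lim\delta_{p0}$. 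Feeding these into the continuous-mapping identity $\mathcal{B}_{p0}=(Q_1/n)/(Q_1/n+Q_2/n)$ separates the two regimes: for $0\le b<1$ one has $p/n\to0$, so under $[P_{M_0}]$ the denominator mass collapses onto $Q_1/n$ and the limit is $1$, whereas under $[P_{M_p}]$ the signal term reproduces $(1+\delta)^{-1}$; for $b=1$ with $n/p\to r>1$ one has $p/n\to1/r$, and the mass $1/r$ carried by $Q_2/n$ yields the factor $1-1/r$, giving $1-1/r$ under $[P_{M_0}]$ and $(1-1/r)(1+\delta)^{-1}$ under $[P_{M_p}]$, with $r>1$ guaranteeing $n-p-1\to\infty$ so that the chi-square law of large numbers still applies to $Q_1$.

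The step I expect to be the main obstacle is the concentration of the noncentral form $Q_2$ under $M_p$ when $p\to\infty$: one must show that the normalized noncentral chi-square sticks to its mean even though both its degrees of freedom $p$ and its noncentrality $\lambda$ diverge with $n$, which is precisely where the variance estimate $\mathrm{Var}(Q_2)/n^2\to0$ and the assumed convergence $\delta_{p0}\to\delta$ are needed. Everything else---the orthogonal splitting, the matching of degrees of freedom to $n-p-1$ and $p$, and the tracking of the $p/n$ term across the two regimes---is routine bookkeeping once this concentration is in hand.
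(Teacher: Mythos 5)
Your route is structurally sound, and it is in fact the standard one: the paper offers no proof of its own for this lemma (it defers to Lemma~1 of \cite{Moreno2010}), and that proof proceeds exactly as you propose --- the orthogonal splitting $\mathbf{I}_n-\mathbf{J}=(\mathbf{I}_n-\mathbf{H})+(\mathbf{H}-\mathbf{J})$ with $\mathbf{J}=\tfrac1n\mathbf{1}_n\mathbf{1}_n^{\prime}$, the exact identity $\mathcal{B}_{p0}=Q_1/(Q_1+Q_2)$, the central $\chi^2_{n-p-1}$ law of $Q_1$ under both models, the noncentral $\chi^2_p$ law of $Q_2$ under $M_p$, and Chebyshev concentration via $\mathrm{Var}(Q_2)/n^2\to 0$. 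The ranks, the independence claim, and the continuous-mapping step are all correct, and your variance bound does handle the regime where both $p$ and the noncentrality diverge.

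The gap is a factor of two in the final step, and it changes the conclusion. With the lemma's own definition $\delta_{p0}=\boldsymbol{\beta}_{p+1}^{\prime}\mathbf{X}^{\prime}(\mathbf{H}-\mathbf{J})\mathbf{X}\boldsymbol{\beta}_{p+1}/(2n\sigma_p^2)$, your noncentrality is $\lambda=\boldsymbol{\beta}_{p+1}^{\prime}\mathbf{X}^{\prime}(\mathbf{H}-\mathbf{J})\mathbf{X}\boldsymbol{\beta}_{p+1}/\sigma_p^2=2n\,\delta_{p0}$, so $\mathbb{E}[Q_2]/n=\sigma_p^2\left(p/n+2\delta_{p0}\right)$, and your own moment computation yields, under $[P_{M_p}]$,
$$
\mathcal{B}_{p0}\;\longrightarrow\;\frac{\lim (n-p-1)/n}{\lim\left((n-1)/n+2\delta_{p0}\right)}
=\left\{
\begin{array}{ll}
(1+2\delta)^{-1}, & 0\le b<1,\\
(1-1/r)\,(1+2\delta)^{-1}, & b=1,
\end{array}
\right.
$$
not the $(1+\delta)^{-1}$ forms you assert. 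The phrase ``proportional to $n\,\delta_{p0}$'' in your second paragraph is precisely where this constant gets lost: the proportionality constant is $2$, and it survives into the limit. To close the argument you must either state the limits with $1+2\delta$, or define the pseudo-distance without the factor $\tfrac12$, i.e.\ $\delta_{p0}=\boldsymbol{\beta}_{p+1}^{\prime}\mathbf{X}^{\prime}(\mathbf{H}-\mathbf{J})\mathbf{X}\boldsymbol{\beta}_{p+1}/(n\sigma_p^2)$, which is the normalization under which the displayed limits are correct and which the remainder of the paper (Theorem~\ref{asymptoticb1} and its proof, where $\lim\mathcal{B}_{p0}=(r-1)/(r(1+\delta))$ is used as the primitive) implicitly assumes. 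Carried out with the constants tracked, your argument actually exposes that the lemma's printed definition and its printed limits are mutually inconsistent; as written, your proof does not establish the statement in the form given because that discrepancy is never reconciled.
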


\begin{proof}
The proof is given in Lemma 1 in \cite{Moreno2010} and, hence, it is omitted.
\end{proof}

\subsection{The case $\mathbf{b<1}$}

When $n$ goes to infinity the Bayes factors $B_{p0}^{IP}$, $B_{p0}^{IPH}$, $B_{p0}^{ZS}$,
$B_{p0}^{FS}$ and $B_{p0}^{B}$ are all consistent. Further for $b=0$, that is for finite $p$, the
convergence rate under $M_{0}$ is $O(n^{-p/2})$ and exponential under $M_{p}$. However, for
$0<b<1$ the rate of convergence under $M_{0}$ is not $O(n^{-p/2})$ although it is exponential
under $M_{p}$. Some of the consistency proofs of the Bayes factors have been given by
\cite{Fernandez2001}, \cite{Liang2008}, \cite{Bayarri2012} and \cite{Moreno2010}. Thus we only
examine here the consistency of those Bayes factors for which either there is no consistency proof
or the existing ones are not correct. Hence, we pay attention to the Bayes factors $B_{p0}^{L}$,
$B_{p0}^{CG}$, $B_{p0}^{R}$, $B_{p0}^{IPH}$ and $B_{p0}^{B}$. \medskip

\begin{lemma} \label{LBnLarge}
For any $\mathcal{B}_{p0}$ and large $n$ we have that \smallskip

\begin{enumerate}[i)]
\item $B_{p0}^{L}(\mathcal{B}_{p0}) \geq \dfrac{1}{2} \, n^{-(p+3)/2} \,
\mathcal{B}_{p0}^{-\frac{n-1}{2}} \left( \dfrac{1 - \mathcal{B}_{p0}}{2 \mathcal{B}_{p0}}
\right)^{\negthinspace -\frac{p+1}{2}} \Gamma \left( \tfrac{p+1}{2} \right)$.
\medskip

\item $B_{p0}^{B}(\mathcal{B}_{p0}) \approx \dfrac{1}{2} \,
\dfrac{n^{-p/2}}{(p+1)^{1/2}} \, \mathcal{B}_{p0}^{-\frac{n-1}{2}} \left( \negthinspace
\dfrac{1-\mathcal{B}_{p0}}{2 \mathcal{B}_{p0}} \negthinspace \right)^{\negthinspace
-\frac{p+1}{2}} \, \gamma \negthinspace \left( \frac{p+1}{2},
\frac{1-\mathcal{B}_{p0}}{\mathcal{B}_{p0}} \frac{p+1}{2} \right)$, \smallskip

\noindent where $\gamma(a,b) = \displaystyle{\int_0^b x^{a-1} exp(-x) \, dx}$ is the lower
incomplete gamma function.
\end{enumerate}
\end{lemma}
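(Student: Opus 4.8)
The plan is to treat both bounds by a single change of variables that converts each $g$-integral into an (incomplete) gamma integral: part (i) is then obtained by replacing two factors with elementary lower bounds, while part (ii) is the corresponding Laplace-type approximation. Throughout I write $\mathcal{B}=\mathcal{B}_{p0}\in(0,1]$ and set $c=(1-\mathcal{B})/\mathcal{B}\ge 0$, so that $1+g\mathcal{B}=\mathcal{B}\,(1+g+c)$. Pulling $\mathcal{B}^{-(n-1)/2}$ out of the denominator and writing $1+g+c=(1+g)\bigl(1+\tfrac{c}{1+g}\bigr)$ turns the common kernel into
$$\frac{(1+g)^{(n-p-1)/2}}{(1+g\mathcal{B})^{(n-1)/2}}=\mathcal{B}^{-(n-1)/2}\,(1+g)^{-p/2}\Bigl(1+\tfrac{c}{1+g}\Bigr)^{-(n-1)/2}.$$
This already isolates the factor $\mathcal{B}^{-(n-1)/2}$ and, after the change of variables and rescaling below, produces the power $\bigl(\tfrac{1-\mathcal{B}}{2\mathcal{B}}\bigr)^{-(p+1)/2}$ that appears in both formulas.

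For part (i) I would first use $g/n\le g$, valid for $n\ge1$, to bound the prior factor $(1+g/n)^{-3/2}\ge(1+g)^{-3/2}$, which merges with $(1+g)^{-p/2}$ into $(1+g)^{-(p+3)/2}$. The substitution $u=c/(1+g)$ (so $dg=-c\,u^{-2}\,du$ and $g\in(0,\infty)$ maps to $u\in(0,c)$) then yields exactly
$$c^{-(p+1)/2}\int_{0}^{c}u^{(p-1)/2}\bigl(1+u\bigr)^{-(n-1)/2}\,du.$$
Bounding $(1+u)^{-(n-1)/2}\ge e^{-(n-1)u/2}$ (from $1+u\le e^{u}$) and rescaling $x=(n-1)u/2$ produces the lower incomplete gamma $\gamma\bigl(\tfrac{p+1}{2},\tfrac{(n-1)c}{2}\bigr)$ together with the factor $\bigl(\tfrac{2}{(n-1)c}\bigr)^{(p+1)/2}$. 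Reinstating the prefactor $\tfrac{1}{2n}\mathcal{B}^{-(n-1)/2}$ gives an exact lower bound, and for large $n$ I replace $n-1$ by $n$ and $\gamma\bigl(\tfrac{p+1}{2},\tfrac{(n-1)c}{2}\bigr)$ by the complete $\Gamma\bigl(\tfrac{p+1}{2}\bigr)$ to recover the stated expression.

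For part (ii) the prior factor is already $(1+g)^{-3/2}$, so no bounding step is needed there; the only differences are that the lower limit of integration $g_0=\tfrac{1+n}{1+p}-1$ maps under $u=c/(1+g)$ to the finite upper limit $u=\tfrac{c(1+p)}{1+n}$, and that I now \emph{approximate} $(1+u)^{-(n-1)/2}\approx e^{-(n-1)u/2}$ rather than bound it. The same rescaling then gives the incomplete gamma $\gamma\bigl(\tfrac{p+1}{2},\tfrac{(n-1)c(1+p)}{2(1+n)}\bigr)$, whose argument tends to $\tfrac{1-\mathcal{B}}{\mathcal{B}}\tfrac{p+1}{2}$; collecting the prefactor $\tfrac12\bigl(\tfrac{1+n}{1+p}\bigr)^{1/2}\mathcal{B}^{-(n-1)/2}$ and using $1+n\sim n$ reproduces the constant $\tfrac12\,n^{-p/2}(p+1)^{-1/2}$.

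The main obstacle is controlling the two large-$n$ replacements so the claimed form is faithful. In part (i) the clean inequalities yield a bound with $n-1$ and an incomplete gamma; passing to $n$ and to the complete gamma is legitimate because the polynomial gain $\bigl(\tfrac{n}{n-1}\bigr)^{(p+1)/2}=1+O(p/n)$ dominates the exponentially small tail $\Gamma\bigl(\tfrac{p+1}{2}\bigr)-\gamma\bigl(\tfrac{p+1}{2},\tfrac{(n-1)c}{2}\bigr)$, so the inequality direction survives for $n$ large; this must be checked, since replacing $\gamma$ by the larger $\Gamma$ on its own would move the bound in the wrong direction. In part (ii) the delicate point is justifying the Laplace replacement $(1+u)^{-(n-1)/2}\approx e^{-(n-1)u/2}$: because the mass concentrates on $u=O(1/n)$, the relative error is $O(1/n)$ uniformly over the effective range, which is precisely what makes the stated approximation valid.
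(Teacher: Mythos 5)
Your proposal is correct and, at its core, uses the same ingredients as the paper's proof: the elementary bound $(1+g/n)^{-3/2}\ge(1+g)^{-3/2}$ for part (i), the factorization of the kernel that pulls out $\mathcal{B}_{p0}^{-(n-1)/2}$, and the reduction to (incomplete) gamma integrals. Where you genuinely differ is the order of operations. The paper substitutes $y=g/n$, approximates the integrand pointwise for large $n$ by $y^{-(p+3)/2}\exp\bigl(-\tfrac{1-\mathcal{B}_{p0}}{2y\mathcal{B}_{p0}}\bigr)$, and only then integrates; note that this step replaces $(y+1/n)^{-(p+3)/2}$ by the \emph{larger} $y^{-(p+3)/2}$, so the ``$\ge$'' asserted in part (i) does not rigorously survive that approximation. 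You instead substitute $u=c/(1+g)$ with $c=(1-\mathcal{B}_{p0})/\mathcal{B}_{p0}$, which turns the integral \emph{exactly} into $c^{-(p+1)/2}\int_0^{c}u^{(p-1)/2}(1+u)^{-(n-1)/2}\,du$ (with upper limit $c(1+p)/(1+n)$ in part (ii)), then use the genuine inequality $(1+u)^{-(n-1)/2}\ge e^{-(n-1)u/2}$, and defer all asymptotics to the final closed-form expression. This buys rigor: your part (i) is an honest inequality up to the single, explicitly controlled step of trading the gain $\bigl(\tfrac{n}{n-1}\bigr)^{(p+1)/2}$ against the exponentially small tail $\Gamma-\gamma$ (you are right that this check is needed, and your reason for why it passes is sound for fixed $\mathcal{B}_{p0}$), and your part (ii) has an error you can quantify, since the whole integration range is $O(1/n)$. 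One caveat worth recording: your tail argument, like the paper's approximation, is pointwise in $\mathcal{B}_{p0}$; if $1-\mathcal{B}_{p0}$ shrinks with $n$ --- which is exactly the regime in which the lemma is invoked under $M_0$ in Theorem \ref{asymptoticLRM0b0} --- then $(n-1)c/2$ need not tend to infinity and $\gamma$ cannot be replaced by $\Gamma$. That limitation is inherited from the loose statement ``for any $\mathcal{B}_{p0}$ and large $n$'' and affects the paper's own proof equally, so it is not a gap in your proposal relative to the paper.
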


\begin{proof}
See Appendix \ref{secA1}
\end{proof}

\vskip 10pt The next theorem shows that the Bayes factor $B_{p0}^{IPH}$ and $B_{p0}^{B}$ are
consistent, but $B_{p0}^{L}$ and the Bayes factors for a subclass of robust priors are
inconsistent for any $p$. \medskip

\begin{theorem} \label{asymptoticLRM0b0} $\ $ \medskip

\begin{enumerate}[i)]
\item The Bayes factor $B_{p0}^{L}(\mathcal{B}_{p0})$\ is inconsistent under the null model $M_{0}$ for any
$p$. \smallskip

\item The Bayes factor $B_{p0}^{R}(\mathcal{B}_{p0})$ for any robust g-prior in the class
$$
\mathcal{R} = \{ \pi^{R}(g \mid a,d,\rho) : a, d > 0, \rho = d/(d+n) \}
$$
is inconsistent under the null model $M_{0}$ for any $p$. \smallskip

\item The Bayes factors $B_{p0}^{IPH}(\mathcal{B}_{p0})$ and $B_{p0}^{B}(\mathcal{B}_{p0})$ are consistent.
\end{enumerate}

\end{theorem}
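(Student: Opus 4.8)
The plan is to evaluate each Bayes factor along the two sampling sequences using the limits of $\mathcal{B}_{p0}$ furnished by Lemma~\ref{AdistrBp0}: for $0\le b<1$ we have $\mathcal{B}_{p0}\to 1$ $[P_{M_0}]$ and $\mathcal{B}_{p0}\to(1+\delta)^{-1}<1$ $[P_{M_p}]$. Recalling that $B_{p0}$ is the factor of $M_p$ against $M_0$, consistency means $B_{p0}\to 0$ $[P_{M_0}]$ and $B_{p0}\to\infty$ $[P_{M_p}]$. Hence for parts (i) and (ii) it suffices to produce a nonnegative lower bound that does not vanish under $M_0$, while for (iii) I must exhibit the two correct limits. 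Throughout I would use that under $M_0$ the quantity $1-\mathcal{B}_{p0}$ is the added-regressor $R^2$ statistic, distributed as $\mathrm{Beta}(p/2,(n-p-1)/2)$, so that $(n-1)(1-\mathcal{B}_{p0})$ converges to a $\chi^2_p$ variate and $\mathcal{B}_{p0}^{-(n-1)/2}\to e^{\chi^2_p/2}$.

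For part (i) I would begin from the lower bound of Lemma~\ref{LBnLarge}(i) and substitute the null behaviour $\mathcal{B}_{p0}\to 1$. The difficulty is that the bound is an indeterminate $0\cdot\infty$ form: the prefactor $n^{-(p+3)/2}$ vanishes while $\bigl(\tfrac{1-\mathcal{B}_{p0}}{2\mathcal{B}_{p0}}\bigr)^{-(p+1)/2}$ diverges as $\mathcal{B}_{p0}\to1$. Resolving it requires feeding in the exact $\mathrm{Beta}$/$\chi^2_p$ rate above, which turns the product $n^{-(p+3)/2}\,\mathcal{B}_{p0}^{-(n-1)/2}\,\bigl(\tfrac{1-\mathcal{B}_{p0}}{2\mathcal{B}_{p0}}\bigr)^{-(p+1)/2}\Gamma\!\bigl(\tfrac{p+1}{2}\bigr)$ into a tractable limiting expression. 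The key structural point I would stress is that here the \emph{complete} gamma $\Gamma(\tfrac{p+1}{2})$ appears, so there is no compensating smallness for the blow-up of $(1-\mathcal{B}_{p0})^{-(p+1)/2}$; this is exactly the feature that will be absent in $B_{p0}^{B}$ in part (iii), and it is what I would use to conclude that $B_{p0}^{L}$ fails to attain the decay required for consistency under $M_0$.

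For part (ii) the argument is first algebraic: on the subclass $\mathcal{R}$ one has $\rho=d/(d+n)$, so the truncation point $\rho(d+n)-d=0$ and $\pi^{R}$ collapses to the proper, $n$-free density $a\,d^{a}(g+d)^{-(a+1)}$ on $(0,\infty)$, which retains positive mass near $g=0$. This is precisely the property that made the untruncated prior $\tfrac12(1+g)^{-3/2}$ inconsistent. I would therefore bound $B_{p0}^{R}$ below by restricting the $g$-integral to a fixed neighbourhood of the interior maximiser $g^{\ast}=O(1)$ of the $g$-prior factor, on which the integrand is $\Omega_{P}(1)$; because there is no $n$-dependent normalising constant to suppress it, $B_{p0}^{R}=\Omega_{P}(1)\not\to 0$ under $M_0$, for every $a,d>0$ and every $p$. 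The contrast is with $\pi^{B}$, whose support is truncated to $g>(1+n)/(1+p)-1$ and thus pushed away from $0$ as $n$ grows, anticipating the consistency in (iii).

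For part (iii) I would substitute the Lemma~\ref{AdistrBp0} limits directly. Since $B_{p0}^{IPH}$ is the closed-form $g$-prior factor at $g=2n/(p+1)$, expanding $\log B_{p0}^{IPH}$ gives $-\tfrac{p}{2}\log n+\tfrac12\chi^2_p+o_{P}(1)$ under $M_0$, so $B_{p0}^{IPH}\to0$ at rate $n^{-p/2}$, whereas under $M_p$ the term $\tfrac{n-1}{2}\log(1+\delta)$ dominates and sends it to $\infty$. For $B_{p0}^{B}$ I would use the approximation of Lemma~\ref{LBnLarge}(ii): under $M_0$ the second argument $\tfrac{1-\mathcal{B}_{p0}}{\mathcal{B}_{p0}}\tfrac{p+1}{2}\to0$, so $\gamma(a,x)\sim x^{a}/a$ and the factor it produces exactly cancels the blow-up of $\bigl(\tfrac{1-\mathcal{B}_{p0}}{2\mathcal{B}_{p0}}\bigr)^{-(p+1)/2}$, leaving the $n^{-p/2}$ decay; under $M_p$ the argument tends to $\tfrac{(p+1)\delta}{2}>0$, so $\gamma$ tends to a positive constant and $\mathcal{B}_{p0}^{-(n-1)/2}=(1+\delta)^{(n-1)/2}$ forces $B_{p0}^{B}\to\infty$. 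The main obstacle throughout is the limit evaluation in (i) and (ii): one must track the competition between the polynomial prefactor, the exponential $\mathcal{B}_{p0}^{-(n-1)/2}$, and the diverging $(1-\mathcal{B}_{p0})^{-(p+1)/2}$ through the precise rate of $\mathcal{B}_{p0}\to1$, the contrast between the complete $\Gamma$ of (i) and the incomplete $\gamma$ of (iii) being the conceptual hinge of the whole theorem.
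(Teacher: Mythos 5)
Your parts (ii) and (iii) are essentially sound; part (i) contains a genuine gap, and it sits exactly at the step you leave verbal.

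For (iii) you follow the paper's own route (Appendix \ref{secA2}): the closed form of $B_{p0}^{IPH}$ combined with Lemma \ref{AdistrBp0}, and for $B_{p0}^{B}$ the approximation of Lemma \ref{LBnLarge}(ii) with $\gamma(a,x)\sim x^{a}/a$ as $x\to 0$ under $M_{0}$ (which cancels the blow-up factor and leaves the $n^{-p/2}$ decay) and $\gamma(a,x)$ tending to a positive constant under $M_{p}$. The only difference is scope: your $\chi^{2}_{p}$ limits implicitly keep $p$ fixed, whereas the paper also covers $p=O(n^{b})$, $0<b<1$. For (ii) you take a genuinely different and cleaner route than the paper. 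You observe that on $\mathcal{R}$ the truncation point is $\rho(d+n)-d=0$, so $\pi^{R}$ collapses to a proper, $n$-free density, and since $\frac{(1+g)^{(n-p-1)/2}}{(1+g\mathcal{B}_{p0})^{(n-1)/2}}=(1+g)^{-p/2}\bigl(\tfrac{1+g}{1+g\mathcal{B}_{p0}}\bigr)^{(n-1)/2}\geq (1+g)^{-p/2}$, one gets $B_{p0}^{R}\geq ad^{a}\int_{0}^{\infty}(1+g)^{-p/2}(g+d)^{-(a+1)}\,dg>0$, a constant free of $n$ and of the data; hence $B_{p0}^{R}\not\to 0$ under $M_{0}$. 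That is all inconsistency requires, and it avoids the delicate asymptotic evaluation by which the paper derives the stronger claim $B_{p0}^{R}\to\infty$, $[P_{M_{0}}]$. (Minor quibble: the prior factor $(g+d)^{-(a+1)}$ has no interior maximiser, but your argument only needs the integrand to be bounded below on a fixed interval, so this is cosmetic.)

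The gap is in (i). You state, correctly, that under $M_{0}$ one has $1-\mathcal{B}_{p0}\sim\mathrm{Beta}(p/2,(n-p-1)/2)$, so $(n-1)(1-\mathcal{B}_{p0})\Rightarrow\chi^{2}_{p}$, i.e.\ $1-\mathcal{B}_{p0}=O_{P}(n^{-1})$. But feeding this rate into the lower bound of Lemma \ref{LBnLarge}(i) gives
\begin{equation*}
n^{-(p+3)/2}\,\mathcal{B}_{p0}^{-(n-1)/2}\Bigl(\tfrac{1-\mathcal{B}_{p0}}{2\mathcal{B}_{p0}}\Bigr)^{-(p+1)/2}\Gamma\bigl(\tfrac{p+1}{2}\bigr)\;\asymp\; n^{-(p+3)/2}\,e^{\chi^{2}_{p}/2}\,n^{(p+1)/2}\;=\;O_{P}(n^{-1})\;\longrightarrow\;0,
\end{equation*}
so the bound vanishes and establishes nothing: a lower bound that tends to zero is perfectly compatible with consistency. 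Your claim that the presence of the \emph{complete} gamma $\Gamma\bigl(\tfrac{p+1}{2}\bigr)$ means ``there is no compensating smallness'' is not an argument --- the prefactor $n^{-(p+3)/2}$ \emph{is} the compensating smallness, and whether it wins is decided precisely by the rate at which $1-\mathcal{B}_{p0}$ vanishes. The paper's own proof reaches the stated limits ($1$ for $p=1$, $\infty$ for $p>1$) only by invoking the rate $1-\mathcal{B}_{p0}=O(n^{-2})$ under $M_{0}$, which flatly contradicts your Beta/$\chi^{2}_{p}$ statement; you cannot hold both. Worse, under your rate this route cannot be repaired: the same algebra, using $\tfrac{1+g}{1+g\mathcal{B}_{p0}}\leq\mathcal{B}_{p0}^{-1}$, yields the upper bound $B_{p0}^{L}\leq\tfrac{1}{2n}\,\mathcal{B}_{p0}^{-(n-1)/2}\int_{0}^{\infty}(1+g)^{-p/2}(1+g/n)^{-3/2}dg=O_{P}(n^{-1/2})\to 0$, $[P_{M_{0}}]$. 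So, as written, your part (i) is unproven, and any completion of it must first resolve the conflict between the two rates rather than gesture at the gamma function.
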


\begin{proof}
See Appendix \ref{secA2}
\end{proof}

\vskip 10pt
\begin{corollary}
The Bayes factor $B_{p0}^{CG}(\mathcal{B}_{p0})$ is inconsistent under the null model $M_{0}$ for
any $p$.
\end{corollary}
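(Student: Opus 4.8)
The plan is to recognize $\pi^{CG}(g \mid n)$ as a particular member of the robust $g$-prior class $\mathcal{R}$ that appears in part ii) of Theorem \ref{asymptoticLRM0b0}, and then to invoke that part of the theorem directly. The whole argument reduces to a parameter-matching check against the general expression (\ref{RBF}).

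First I would substitute $a = 1$ and $d = 1$ into (\ref{RBF}). The defining constraint of the subclass $\mathcal{R}$ is $\rho = d/(d+n)$, which for $d = 1$ forces $\rho = 1/(n+1)$. With these values the leading constant becomes $a[\rho(d+n)]^{a} = 1 \cdot [\tfrac{1}{n+1}(n+1)]^{1} = 1$, the power term becomes $(g+d)^{-(a+1)} = (1+g)^{-2}$, and the support indicator collapses to $\mathbf{1}_{\{g > \rho(d+n)-d\}} = \mathbf{1}_{\{g > 0\}}$, since $\rho(d+n) - d = (n+1)/(n+1) - 1 = 0$. Hence
$$
\pi^{R}\!\left(g \,\Big|\, 1, 1, \tfrac{1}{n+1}\right) = (1+g)^{-2}\,\mathbf{1}_{\{g>0\}} = \pi^{CG}(g \mid n),
$$
so $\pi^{CG}$ lies in $\mathcal{R}$ with $(a,d,\rho) = (1,1,1/(n+1))$.

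Next I would observe that the Bayes factor $B_{p0}^{CG}$ defined in (\ref{CGBF}) is precisely the robust-prior Bayes factor $B_{p0}^{R}$ built from this member of $\mathcal{R}$, since both are obtained by integrating the common kernel $(1+g)^{(n-p-1)/2}(1+g\mathcal{B}_{p0})^{-(n-1)/2}$ against the same mixing density on $(0,\infty)$. Therefore part ii) of Theorem \ref{asymptoticLRM0b0}, applied to the triple $(a,d,\rho) = (1,1,1/(n+1))$, yields directly that $B_{p0}^{CG}(\mathcal{B}_{p0})$ is inconsistent under $M_{0}$ for any $p$.

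There is no genuine obstacle here beyond the bookkeeping: the only point requiring care is verifying that the chosen $(a,d)$ satisfy the boundary relation $\rho = d/(d+n)$ that characterizes $\mathcal{R}$, because it is exactly this boundary choice of $\rho$ (rather than a strictly larger $\rho$) that places the prior inside the inconsistent subclass. Once this identification is confirmed, the corollary is an immediate specialization of the theorem.
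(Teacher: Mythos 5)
Your proof is correct and takes essentially the same route as the paper: both identify $\pi^{CG}(g\mid n)$ as the robust prior $\pi^{R}\bigl(g\mid 1,1,1/(1+n)\bigr)$ lying in the subclass $\mathcal{R}$ (i.e.\ satisfying the boundary relation $\rho=d/(d+n)$) and then invoke part ii) of Theorem \ref{asymptoticLRM0b0}. Your write-up simply makes explicit the parameter-matching checks (leading constant, power term, support indicator) that the paper's one-line proof leaves implicit.
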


\begin{proof}
As $B_{p0}^{CG}(\mathcal{B}_{p0})$ is defined by the robust g-prior in the class $\mathcal{R}$ for
$a = 1$, $d = 1$ and $\rho = 1/(1+n)$, the assertion follows from ii) in Theorem 1.
\end{proof}

\subsection{The case $\mathbf{b=1}$}

For $b=1$ any of the Bayes factors are no longer consistent and every one is inconsistent when
sampling from a set of model $M_{p}$ that depends on the limit of $n/p$ and the limit of the
pseudo distance $\delta_{p0}$ between $M_0$ and $M_p$. The size of the inconsistency set varies
across the Bayes factors. These sets are given in the following theorem. \medskip

\begin{theorem} \label{asymptoticb1}
For \,$p=O(n)$, $\thinspace r = \lim_{n \rightarrow \infty} n/p > 1$ and $\thinspace \delta =
\lim_{n\rightarrow \infty} \delta _{p0} > 0$ we have: \medskip

\begin{enumerate}[i)]
\item The Bayes factor $B_{p0}^{IP}(\mathcal{B}_{p0})$ is consistent except
when sampling from $M_{p}$ and $(r,\delta)$ is in the set
$$
S^{IP} = \left\{ (r,\delta) : (1+r)^{r-1} \left( 1+ \frac{r-1}{1 + \delta} \right)^{-r} \leq 1
\right\}.
$$

\item The Bayes factor $B_{p0}^{IPH}(\mathcal{B}_{p0})$ is consistent except
when sampling from $M_{p}$ and $(r,\delta)$ is in the set
$$
S^{IPho} = \left\{ (r,\delta) : (1+2r)^{r-1} \left( 1+ \frac{2(r-1)}{1 + \delta} \right)^{-r} \leq
1 \right\}.
$$

\item The Bayes factor $B_{p0}^{ZS}(\mathcal{B}_{p0})$ is consistent except
when sampling from $M_{p}$ and $(r,\delta)$ is in the set
$$
S^{ZS} = \left\{ (r,\delta) : \frac{1}{r \, exp(1)} \left( \frac{1 + \delta}{1-1/r} \right)^{r-1}
\leq 1 \right\}.
$$

\item The Bayes factors $B_{p0}^{FS}(\mathcal{B}_{p0})$ is inconsistent when
sampling from any alternative model $M_{p}$. \smallskip

\item The Bayes factor $B_{p0}^{B}(\mathcal{B}_{p0})$ is consistent except
when sampling from $M_{p}$ and $(r,\delta)$ is in the set
$$
S^{B} = \left\{ (r,\delta) : \left( \dfrac{r}{r-1} \right)^{r-1} \frac{(1+\delta)^r}{1+\delta r}
\, exp(-1) \leq 1 \right\}.
$$
\end{enumerate}
\end{theorem}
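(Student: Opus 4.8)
The plan is to reduce every assertion to the asymptotics of $\mathcal{B}_{p0}$ supplied by Lemma \ref{AdistrBp0}(ii): under $M_{0}$, $\mathcal{B}_{p0}\to 1-1/r$, and under $M_{p}$, $\mathcal{B}_{p0}\to(1-1/r)(1+\delta)^{-1}$. Since each Bayes factor is exponential in $p$ (hence in $n$ when $p=O(n)$), I would work with the $p$-th root, studying $\lim_{n\to\infty}\big(B_{p0}(\mathcal{B}_{p0})\big)^{2/p}$ after substituting the appropriate limit of $\mathcal{B}_{p0}$. Consistency under $M_{0}$ then amounts to showing this limit is $<1$ (so $B_{p0}\to 0$), while under $M_{p}$ the Bayes factor is consistent iff the limit exceeds $1$; the inconsistency set is exactly the locus where the $M_{p}$-limit is $\le 1$. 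Throughout one uses $n-p-1\sim(r-1)p$, $n-1\sim rp$ and $\tfrac{2n}{p+1}\to 2r$.

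For the two closed-form factors the computation is direct. For $B_{p0}^{IPH}$, substituting and taking the root gives $\big(B_{p0}^{IPH}\big)^{2/p}\to(1+2r)^{r-1}\big(1+2r\,\mathcal{B}_{p0}\big)^{-r}$; the limit of $\mathcal{B}_{p0}$ under $M_{0}$ yields a value $<1$ (consistency), and under $M_{p}$, where $2r\,\mathcal{B}_{p0}\to 2(r-1)/(1+\delta)$, yields exactly the threshold defining $S^{IPho}$, proving (ii). For $B_{p0}^{FS}$ the degenerate scale $g=n$ is decisive: there $\big(B_{p0}^{FS}\big)^{2/p}\sim n^{\,r-1}/(n\mathcal{B}_{p0})^{r}\to 0$ under every alternative (and also under $M_{0}$), so the factor collapses to $0$ whatever $(r,\delta)$, giving the universal inconsistency of (iv).

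The three remaining factors are integrals and call for a Laplace-type analysis after inserting the limiting $\mathcal{B}_{p0}$. For $B_{p0}^{IP}$ I would set $t=\sin^{2}\varphi$; the prefactor $(p+2)^{p/2}$ then cancels a $\tfrac{p}{2}\log p$ produced by the integrand, and the whole expression reduces to $\big(B_{p0}^{IP}\big)^{2/p}\to\exp\!\big(\max_{t\in[0,1]}\psi(t)\big)$ with $\psi(t)=\log t+(r-1)\log(r+t)-r\log(t+r\mathcal{B}_{p0})$. Evaluating $\psi$ at the endpoint $t=1$ gives $(1+r)^{r-1}(1+r\mathcal{B}_{p0})^{-r}$, which under $M_{p}$ is precisely the quantity cutting out $S^{IP}$; the point is to show that whenever this endpoint value is $\le 1$ the maximizer of $\psi$ sits at $t=1$, whereas in the complementary region $\psi(1)>0$ already forces consistency. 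For $B_{p0}^{ZS}$ I would apply Laplace to the $g$-integral, rescaling $g$ to the scale on which the prior factor $g^{-3/2}\exp(-n/(2g))$ concentrates its mass; the $\exp(-1)$ appearing in $S^{ZS}$ is the trace of this exponential prior tail evaluated at the saddle. For $B_{p0}^{B}$ I would instead feed in the closed approximation of Lemma \ref{LBnLarge}(ii), apply Stirling to the power terms and the large-argument asymptotics of the incomplete gamma $\gamma(a,b)$ with $a=(p+1)/2$ and $b/a\to(1+r\delta)/(r-1)$; the ratio $b/a$ produces the factor $1+\delta r$ and Stirling the $\exp(-1)$, delivering $S^{B}$ and proving (v).

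The main obstacle is uniformly the control of the integral factors. Three points require care: first, confirming that the polynomial-in-$p$ prefactors generated by Stirling's formula and by the Gaussian width of the Laplace approximation are subexponential and hence disappear under the $1/p$ root, so that only the exponential rate survives; second, for $B_{p0}^{IP}$, the endpoint-dominance claim, which reduces to a sign analysis of $\psi'(1)=\tfrac{2r}{r+1}-\tfrac{r}{1+r\mathcal{B}_{p0}}$ and to verifying that $\psi(1)\le 0$ forces $\psi$ to be increasing on $(0,1]$; and third, for $B_{p0}^{B}$, the asymptotics of $\gamma(a,b)$, where the two regimes $b<a$ and $b>a$ must be separated since they govern whether the incomplete gamma behaves like $\Gamma(a)$ or like its truncation, and this is what fixes the exact boundary of $S^{B}$.
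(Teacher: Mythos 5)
Your overall framework --- plug the limits of Lemma \ref{AdistrBp0}(ii) into each Bayes factor and study the exponential rate $\lim_{n}(B_{p0})^{2/p}$ --- is exactly the paper's, and parts ii) and iv) of your proposal reproduce the paper's proof: for $B_{p0}^{IPH}$ the substitution gives precisely the threshold defining $S^{IPho}$, and for $B_{p0}^{FS}$ the root behaves like $n^{r-1}/(n\mathcal{B}_{p0})^{r}=1/(n\mathcal{B}_{p0}^{\,r})\to 0$ under both models, which is the paper's argument. For part i) you in fact supply more than the paper, which gives no proof but a citation of Moreno (2010). Your reduction is correct: with $t=\sin^{2}\varphi$ the powers of $p$ cancel against the prefactor $(p+2)^{p/2}$, the rate is $\max_{[0,1]}\psi$, and the endpoint-dominance step you flag does close, because $\psi'(t)$ has the sign of the affine function $c+t(1+c-r)$ with $c=r\mathcal{B}_{p0}\to (r-1)/(1+\delta)$, which is positive at $t=0$; hence $\psi$ is increasing on $(0,1]$ iff $\psi'(1)\geq 0$ iff $\delta\leq 1$, and $\psi(1)\leq 0$ forces $\delta<1$ since at $\delta=1$ one gets $e^{\psi(1)}=2^{r}/(1+r)>1$. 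So i), ii), iv) are sound.

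The genuine gap is in iii) and v), exactly where the case $b=1$ differs from $b<1$. For $B_{p0}^{ZS}$ you propose a Laplace analysis ``on the scale where the prior concentrates'' and claim the saddle produces the $\exp(-1)$ in $S^{ZS}$. But when $p=O(n)$ that scale does not diverge: the exponent is $\tfrac{p}{2}\bigl[(r-1)\log(1+g)-r\log(1+g\mathcal{B}_{p0})-r/g\bigr]$ up to subexponential terms, and its stationarity equation $\tfrac{r-1}{1+g}+\tfrac{r}{g^{2}}=\tfrac{r\mathcal{B}_{p0}}{1+g\mathcal{B}_{p0}}$ contains no $p$, so the saddle stays bounded and the simplification $1+g\approx g$ (which is what generates the formula behind $S^{ZS}$) is not permitted. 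The honest rate is $\max_{g>0}\bigl[(r-1)\log(1+g)-r\log(1+g\mathcal{B})-r/g\bigr]$ with $\mathcal{B}=\lim\mathcal{B}_{p0}$, and it does not match $S^{ZS}$: at $(r,\delta)=(2,1.6)$, where $\mathcal{B}=0.5/2.6$, the maximum is $\approx +0.088$ (near $g\approx 7$), so $B_{p0}^{ZS}\to\infty$, while $\log\bigl[\tfrac{1}{re}\bigl(\tfrac{1+\delta}{1-1/r}\bigr)^{r-1}\bigr]\approx -0.044$, i.e.\ $(2,1.6)\in S^{ZS}$. So your program, carried out correctly, contradicts the set it is supposed to deliver. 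The paper never performs this computation: it obtains iii) by importing the approximation of Lemma 2 in Moreno (2015), whose own derivation is of the $1+g\approx g$ type.

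Part v) has the same defect. You are right that $\gamma(a,b)\approx\Gamma(a)$ needs $\lambda=\lim b/a=\tfrac{1+\delta r}{r-1}>1$, a point the paper silently ignores; but separating the two regimes does not ``fix the exact boundary of $S^{B}$'': for $\lambda<1$ one has $\tfrac{1}{a}\log\gamma(a,\lambda a)=\log a+\log\lambda-\lambda+o(1)$, which shifts the rate by $\log\lambda+1-\lambda<0$ relative to $\log h(r,\delta)$ and therefore yields a strictly larger inconsistency set than $S^{B}$ in that region. Worse, the input you share with the paper, Lemma \ref{LBnLarge}(ii), is derived by replacing $1+ny$ with $ny$ all the way down to the lower integration limit, where $ny\to r-1$ stays bounded --- again an exponentially significant error when $p=O(n)$. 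A direct Laplace treatment of (\ref{BBF}) gives the rate $\max_{g\geq r-1}\bigl[(r-1)\log(1+g)-r\log(1+g\mathcal{B})\bigr]$, which at $(r,\delta)=(2,0.5)$ equals $\log 2-2\log(4/3)\approx 0.118>0$ (consistency), even though $(2,0.5)\in S^{B}$. In short, for iii) and v) your proposal asserts, rather than derives, that the asymptotics land on the stated sets; when the computations you outline are actually completed they do not, so a full proof must either justify the borrowed approximations in the $p=O(n)$ regime (which your own saddle-point analysis indicates is not possible) or confront the resulting discrepancy with the statement of the theorem itself.
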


\begin{proof}
See Appendix \ref{secA3}
\end{proof}

\section{Posterior model probability for moderate sample size}\label{pprob}

Given a Bayes factor $B_{p0}^{C}(\mathcal{B}_{p0})$ and the model prior $\pi (M_{0})=\pi
(M_{p})=0.5$, the posterior probability of model $M_{0}$ in the set $\{M_{0},M_{p}\}$ is given by
\[
P^{C}(M_{0}|\mathcal{B}_{p0}) = \left( 1 + B_{p0}^{C}(\mathcal{B}_{p0})\right) ^{-1}.
\]
Figure \ref{fig1} displays the posterior probability of $M_{0}$ as a function of
$\mathcal{B}_{p0}$ for those Bayes factors that are consistent for finite $p$, say $B_{p0}^{IP} $,
$B_{p0}^{IPH}$, $B_{p0}^{ZS}$, $B_{p0}^{FS}$ and $B_{p0}^{B}$, and some values of $n$ and $p$.
This Figure illustrates the fact that for small $p$ all posterior probability curves are very
similar, more so as $n$ increases. However, as $p$ grows the curves can be classified into two
different groups, one with the Bayes factors $\{B_{p0}^{ZS}, B_{p0}^{FS}\}$ and another with
$\{B_{p0}^{IP}, B_{p0}^{IPH}, B_{p0}^{B}\}$. The former group contains curves much less smoother
than the latter, that is much more in favor of model $M_{0}$ than the latter, more so as $p$
grows. Except for small $p$ and even for small $n$ the curves of the former group jump from almost
$0$ to almost $1$ in a very narrow interval. This is a behavior that one expects only for large
values of $n$.

\begin{figure}[H]
\centering
\begin{tabular}{lll}
\includegraphics[width=0.3\textwidth]{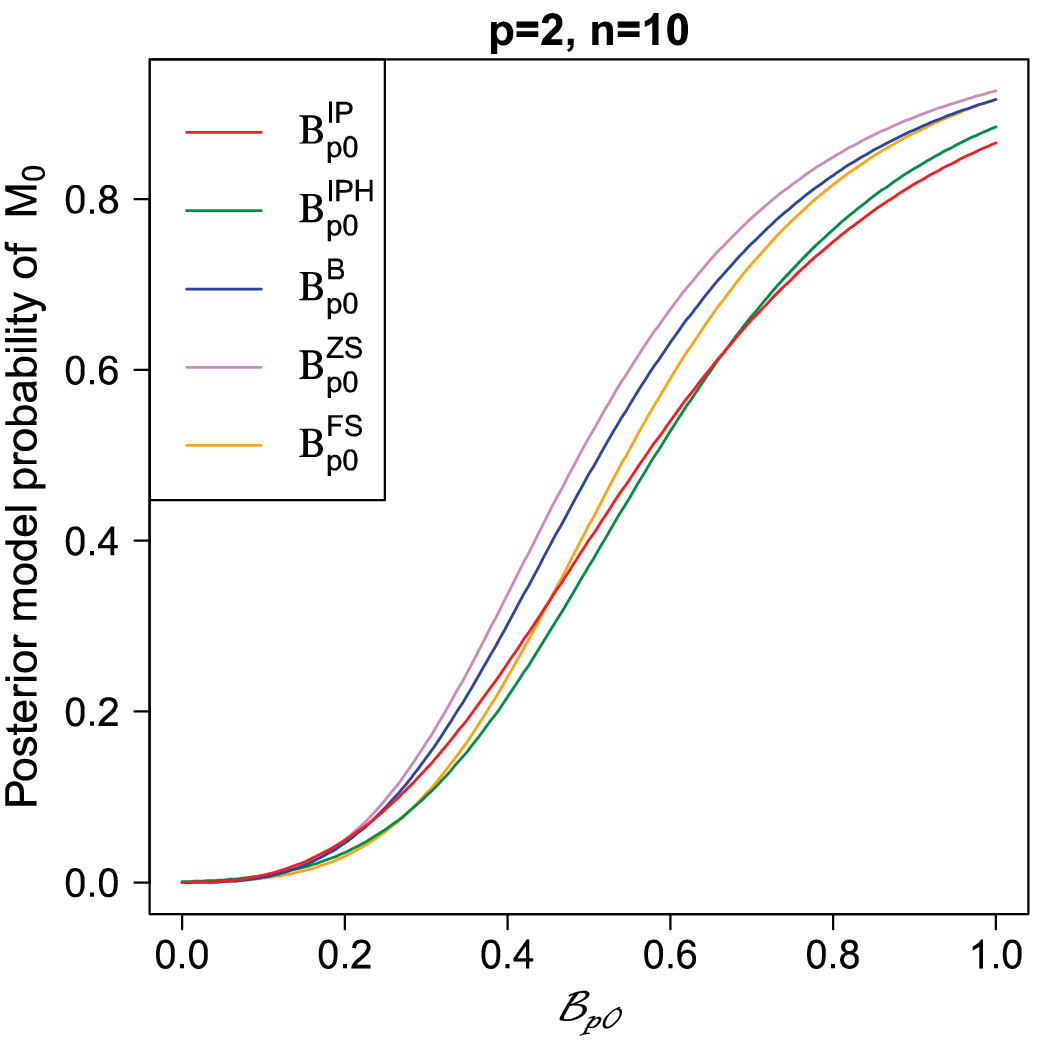} & \includegraphics[width=0.3\textwidth]{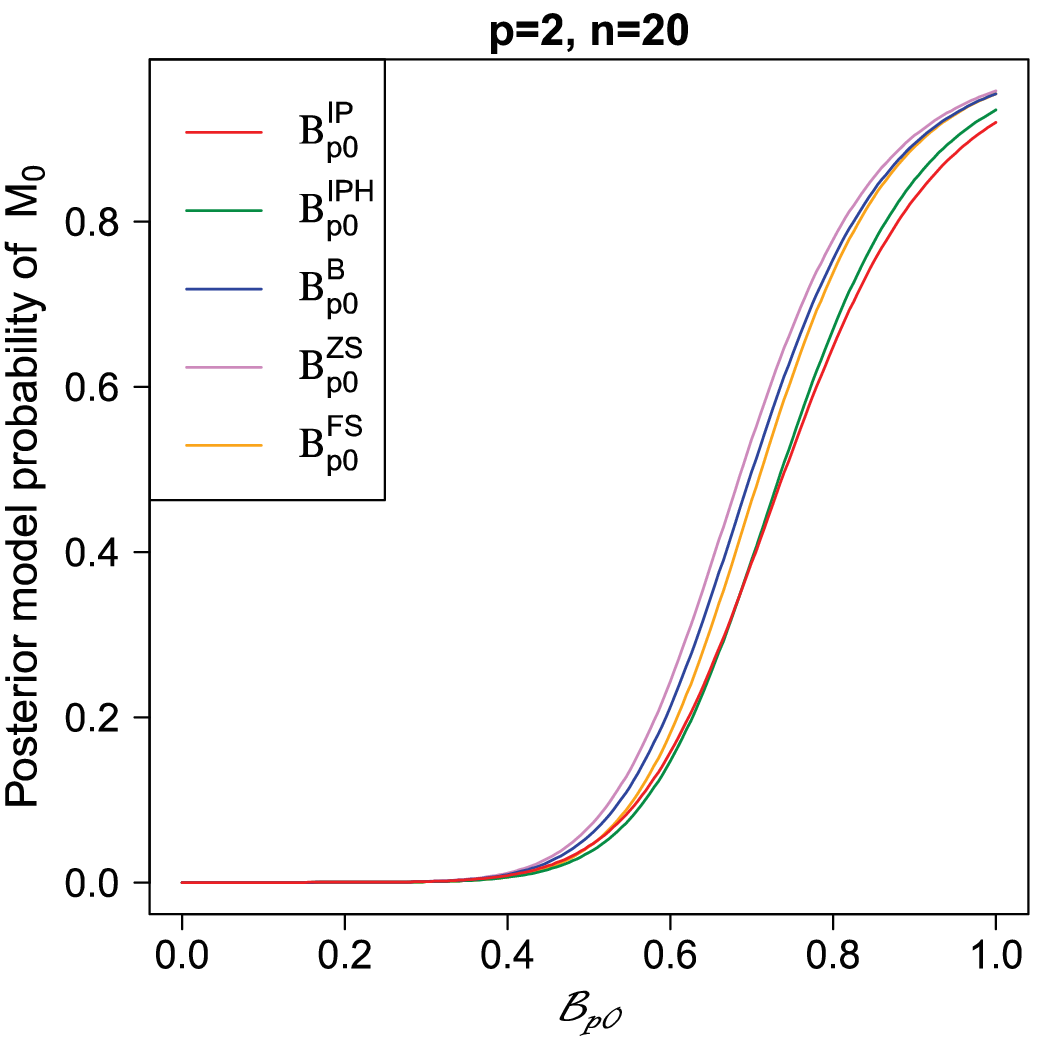} & \includegraphics[width=0.3\textwidth]{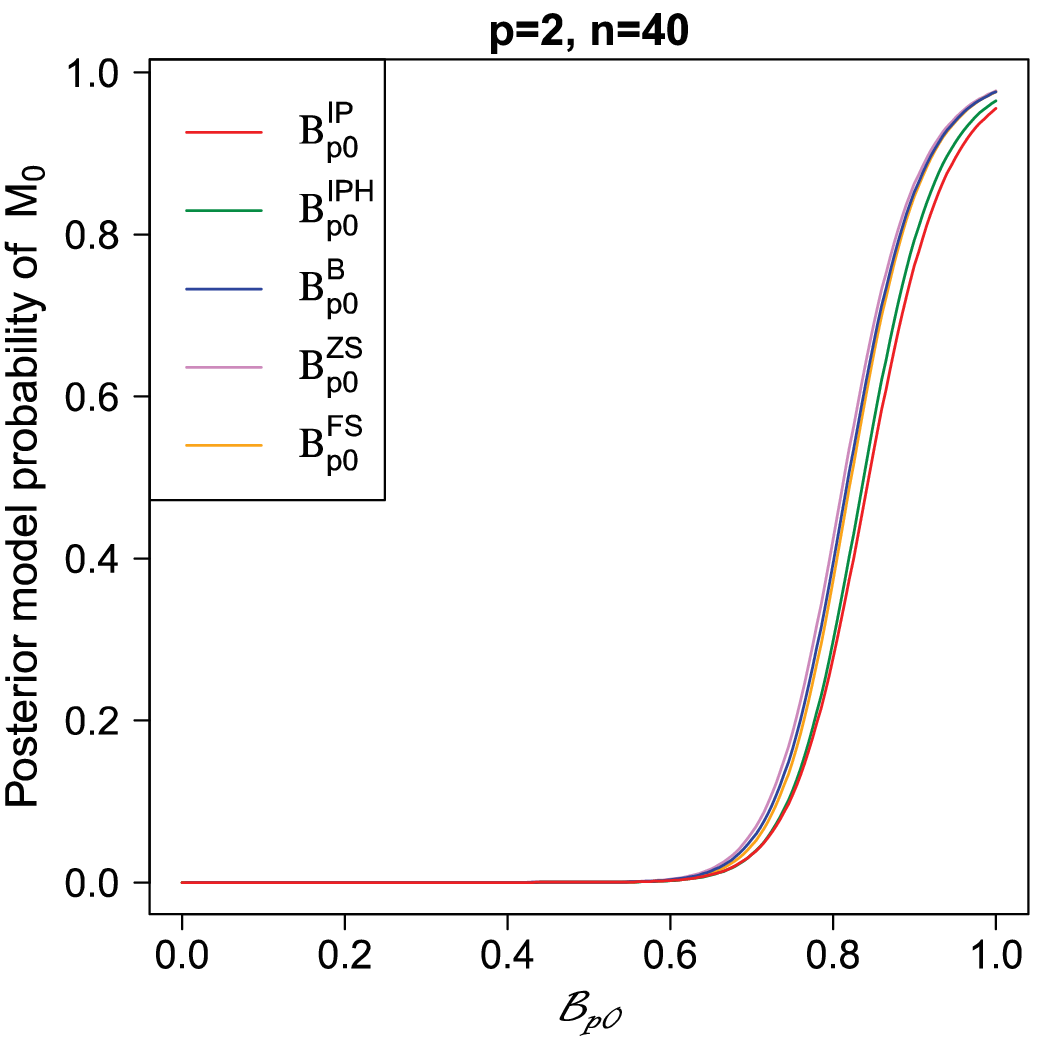} \\
\includegraphics[width=0.3\textwidth]{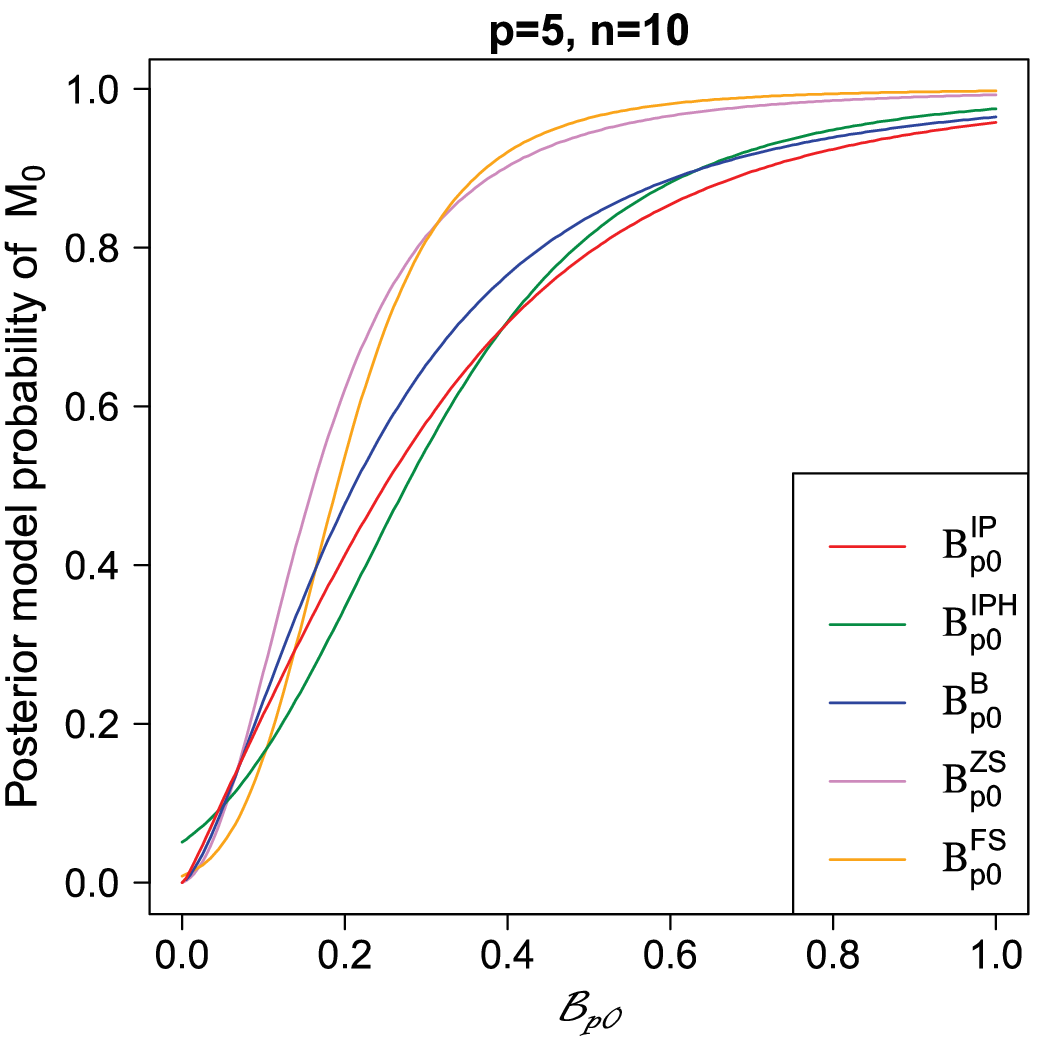} & \includegraphics[width=0.3\textwidth]{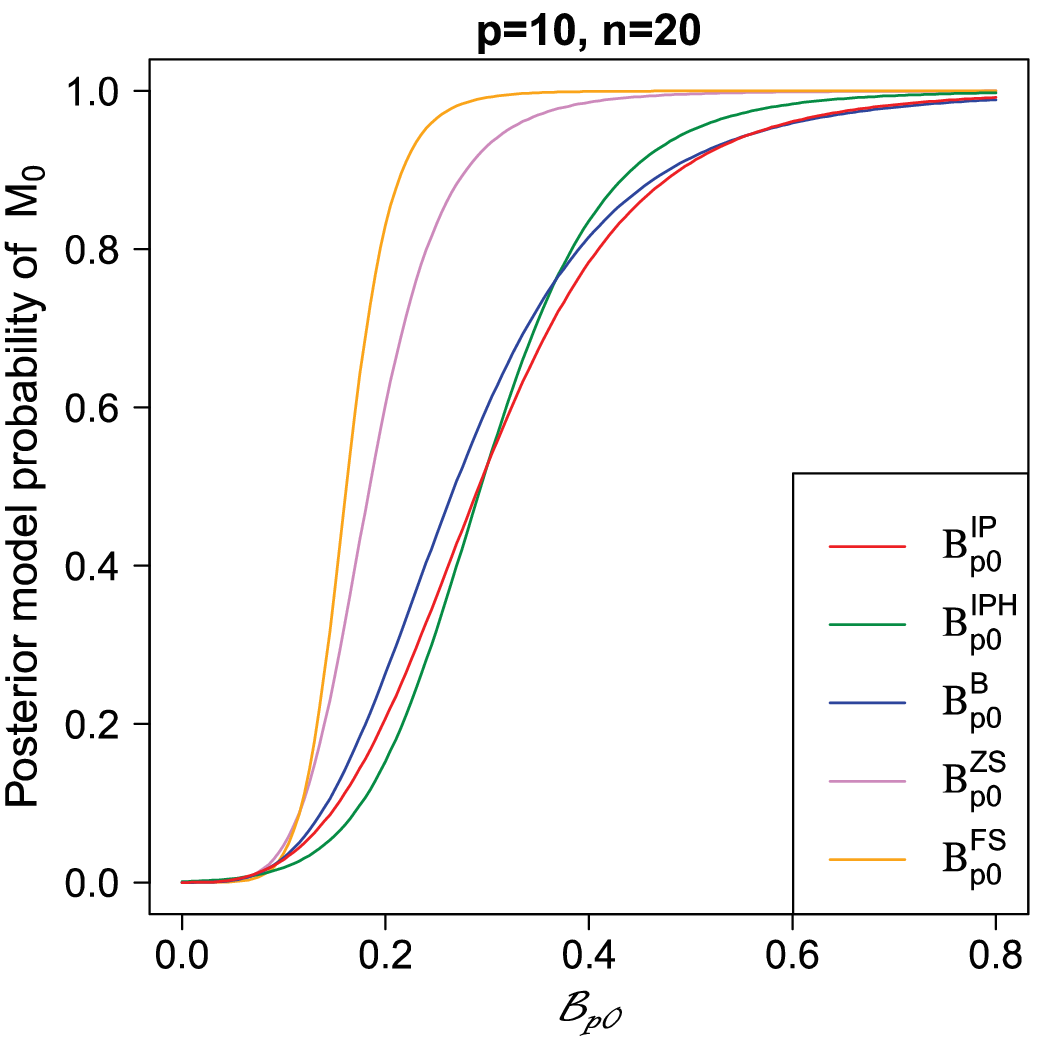} & \includegraphics[width=0.3\textwidth]{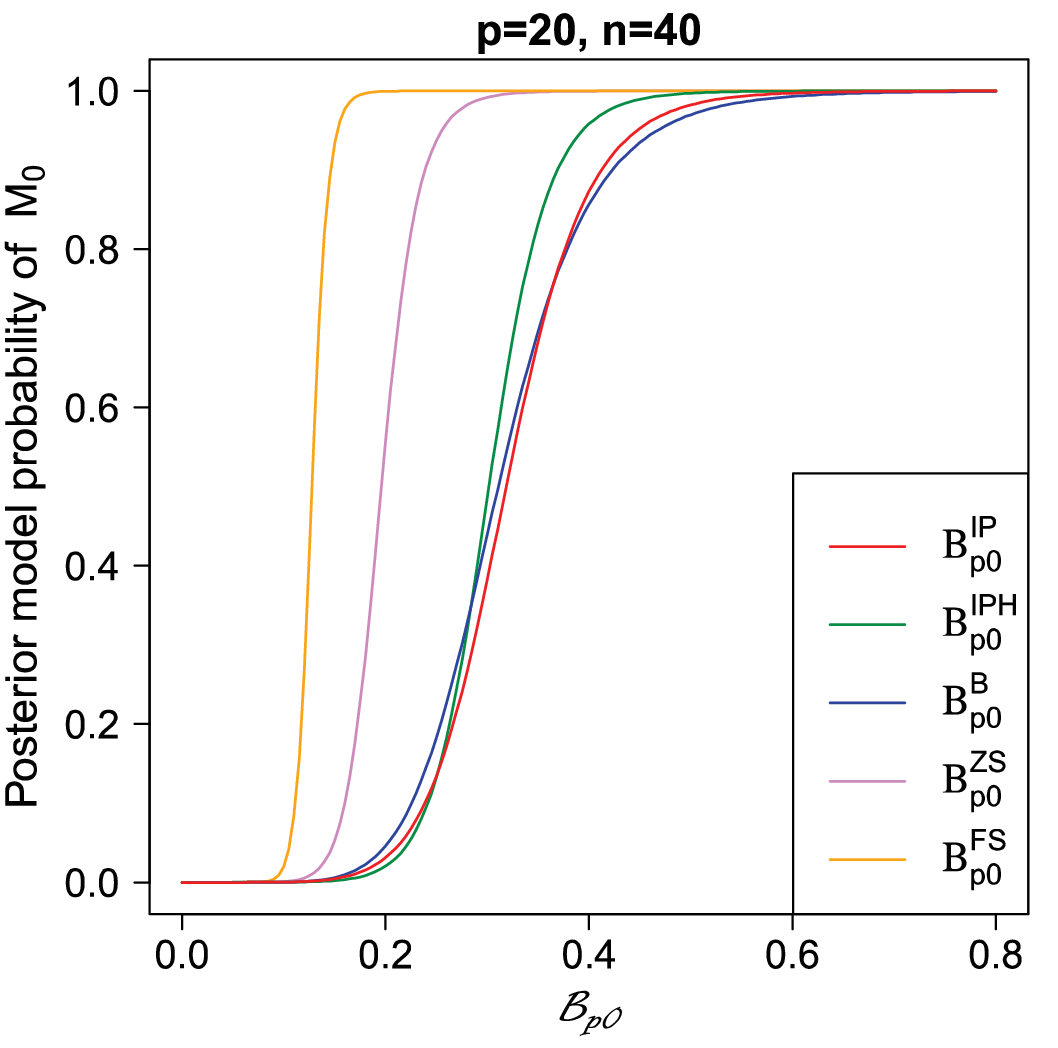} \\
\includegraphics[width=0.3\textwidth]{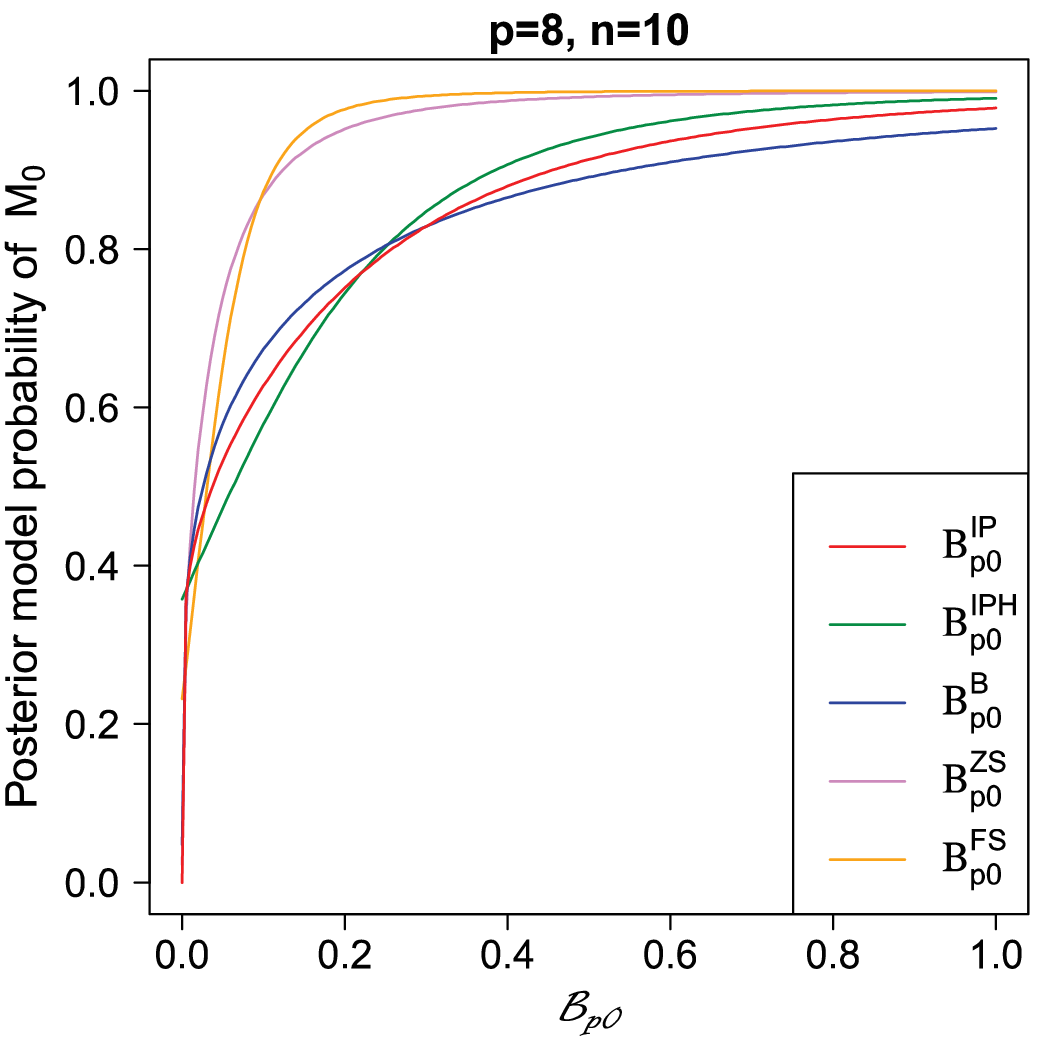} & \includegraphics[width=0.3\textwidth]{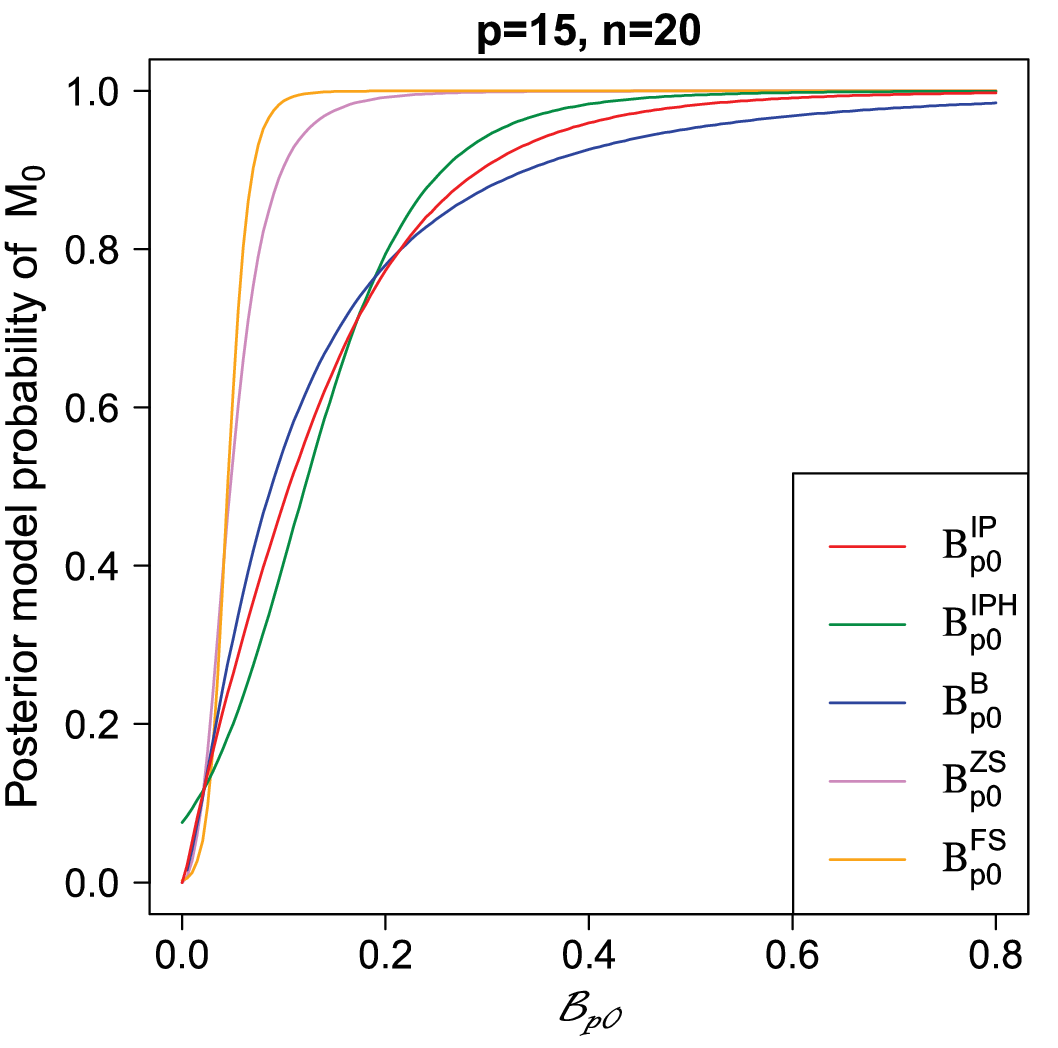} & \includegraphics[width=0.3\textwidth]{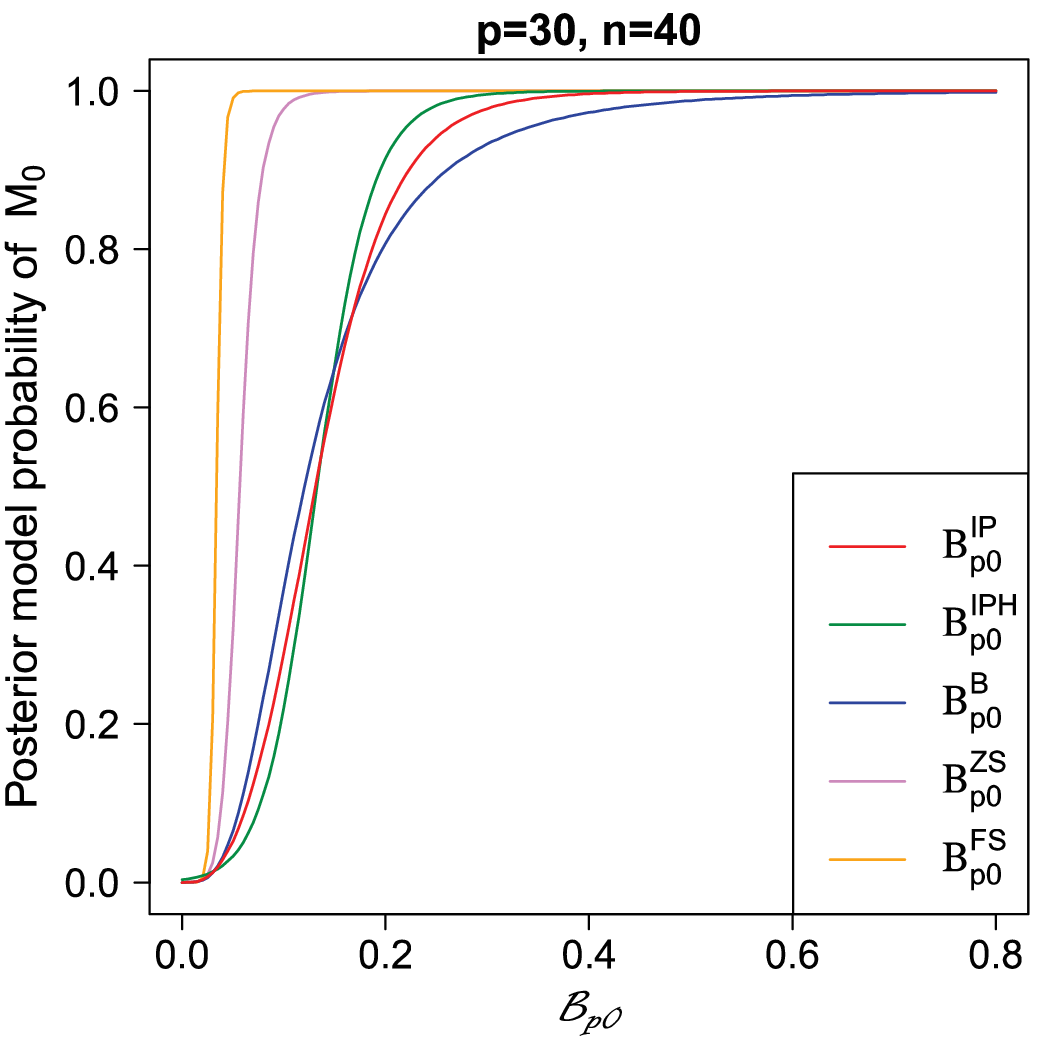} \\
\end{tabular}
\caption{Posterior probability of model $M_{0}$ in the set $\{M_{0},M_{p}\}$ as a function of
$\mathcal{B}_{p0}$.\label{fig1}}
\end{figure}

\section{Conclusion}\label{conc}

Asymptotically we have found the unexpected result that the Bayes factor
$B_{p0}^{L}(\mathcal{B}_{p0})$, $B_{p0}^{CG}(\mathcal{B}_{p0})$ and those for a wide subclass of
the robust prior class are inconsistent for any $p$. Further, for the robust priors in expression
(\ref{RBF}) the following conditions (i) $a$ and $\rho $ do not depend on $n$, (ii)
$\lim_{n\rightarrow \infty }b/n=c\geq 0$, (iii) $\lim_{n\rightarrow \infty }\rho (b+n)=\infty $,
and (iv) $n\geq p+1+2a$, were stated as sufficient conditions to ensure the consistency of the
Bayes factor \citep{Bayarri2012}. Unfortunately, this assertion is not true and the Bayes factor
$B_{p0}^{L}$, which is derived from the robust prior $\pi ^{R}(g|1/2,n,1/2)$, serves as a
counter-example.
\medskip

The others Bayes factors $B_{p0}^{IP}$, $B_{p0}^{IPH}$, $B_{p0}^{ZS}$, $B_{p0}^{FS}$ and
$B_{p0}^{B}$ are all consistent for $p=O(n^{b})$ and $0\leq b<1$. \medskip

For $b=1$, the latter Bayes factors are also consistent under the null. However, under the
alternative $B_{p0}^{FS}$ is inconsistent, so that it has the same asymptotic as the BIC
approximation, and the remaining Bayes factors are consistent except for specific sets of points
$(r,\delta)$. These inconsistency sets can be written in the explicit form
$$
S^{C}=\left\{ (r,\delta ):\delta <\delta ^{C}(r)\right\}
$$
for $C=IP, \ IPH, \ ZS$, where
$$
\delta ^{C}(r) = \left\{
\begin{array}{ll}
\dfrac{(r-1)}{(1+r)^{(r-1)/r} - 1} - 1, & \hbox{\ if $C = IP$,} \smallskip \\
\dfrac{2(r-1)}{(1+2r)^{(r-1)/r} - 1} - 1, & \hbox{\ if $C = IPH$,} \smallskip \\
(r \, exp(1))^{1/(r-1)} (1-1/r) - 1 , & \hbox{\ if $C = ZS$,} \\
\end{array}
\right.
$$
are decreasing convex functions of $r$ such that $\lim_{r \rightarrow \infty }\delta ^{C}(r)=0$,
so that as $r$ increases the inconsistency set of every Bayes factor tends to be the empty set.
\medskip

\begin{figure}[h]
\centering
\begin{tabular}{ll}
\includegraphics[width=0.4\textwidth]{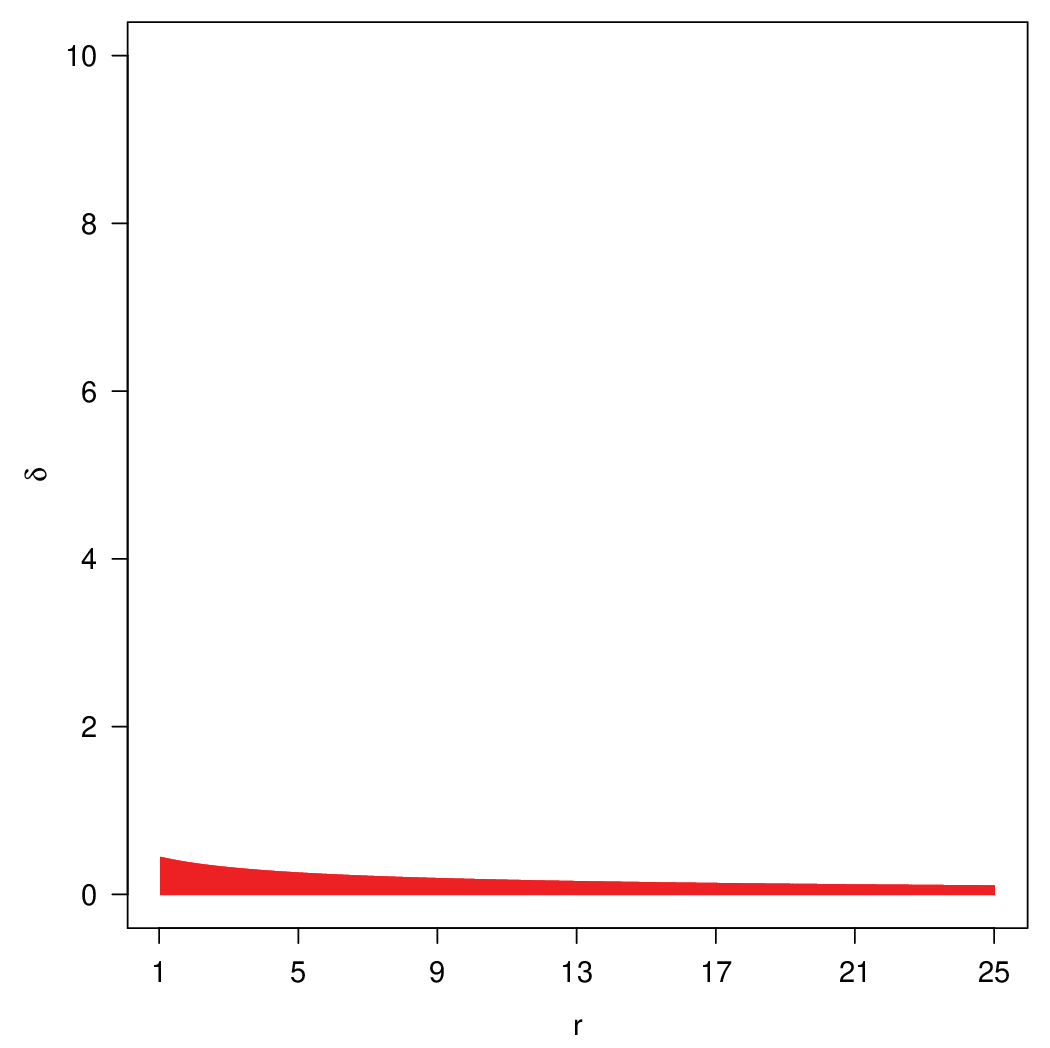} & \includegraphics[width=0.4\textwidth]{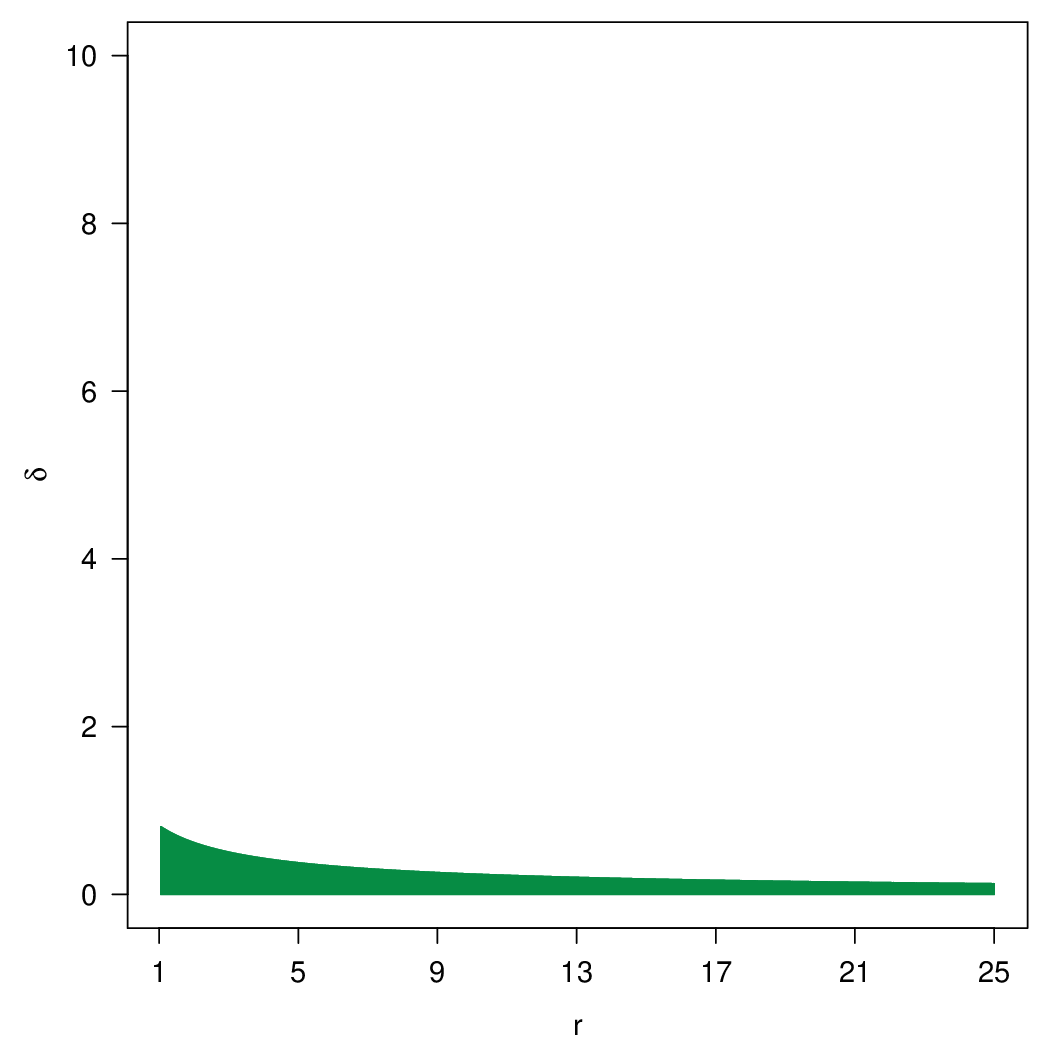} \\
\includegraphics[width=0.4\textwidth]{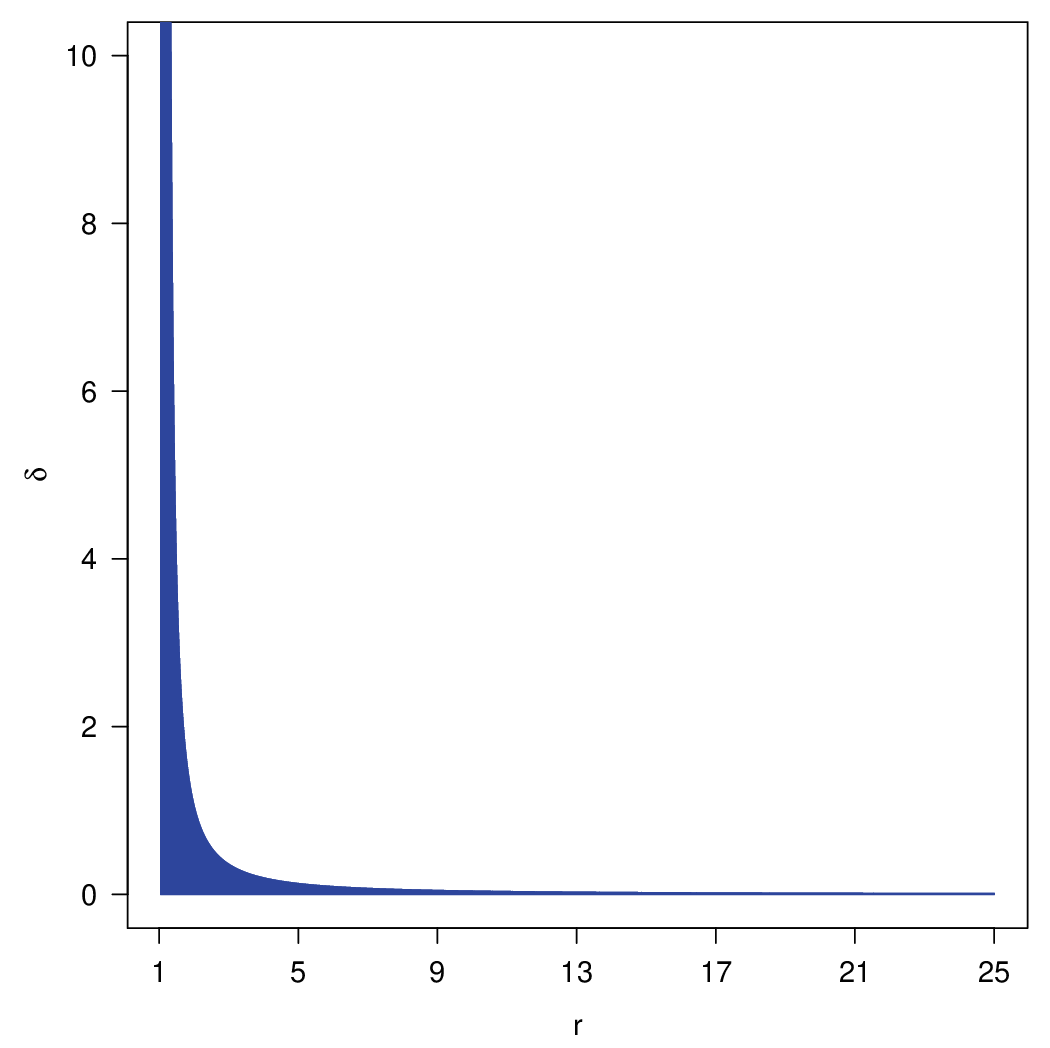} & \includegraphics[width=0.4\textwidth]{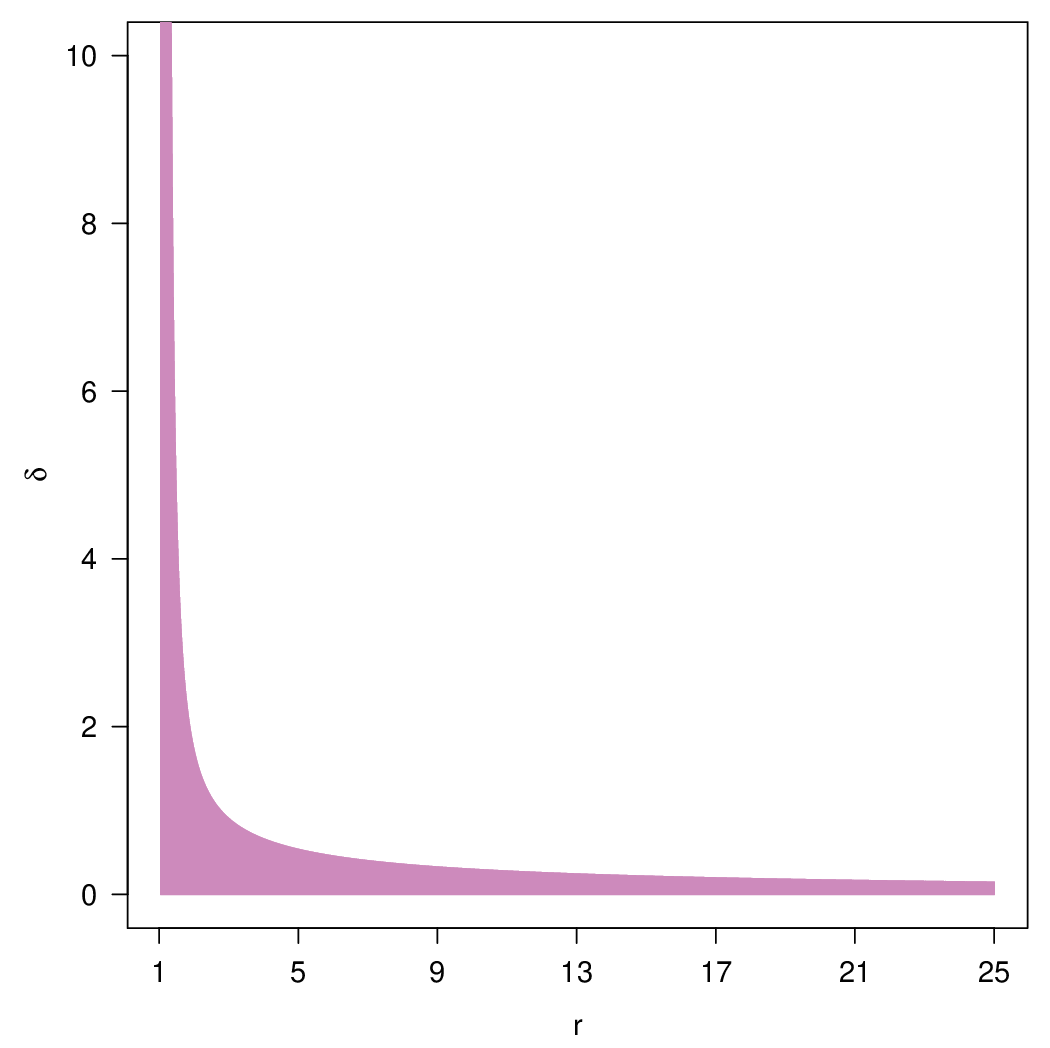} \\
\end{tabular}
\caption{Inconsistency set of $B_{p0}^{IP}$, top left panel, $B_{p0}^{IPH}$, top right panel,
$B_{p0}^{B}$, bottom left panel, and $B_{p0}^{ZS}$, bottom right panel. \label{fig2}}
\end{figure}

In Figure \ref{fig2} the inconsistency set of each Bayes factor is represented by a colored area.
Certainly the smaller the inconsistency set the preferred the Bayes factor as model selector. We
note that $S^{B} \subset S^{ZS}$ and hence $B_{p0}^{B} \succsim B_{p0}^{ZS}$. Also $S^{IP} \subset
S^{IPH} \subset S^{ZS}$ and hence $B_{p0}^{IP} \succsim B_{p0}^{IPH} \succsim B_{p0}^{ZS}$.
Moreover,
\[
\lim_{r\rightarrow 1}\delta ^{C}(r)=\left\{
\begin{array}{ll}
0.4427, & \hbox{\ if $C = IP$,} \\
0.8205, & \hbox{\ if $C = IPH$,} \\
\infty , & \hbox{\ if $C = ZS, \, B$,}
\end{array}
\right.
\]
which means that $S^{IP}$ and $S^{IPH}$ only includes alternative models close to the null
($\delta < 0.4427$ and $\delta < 0.8205$, respectively) while $S^{ZS}$ and $S^{B}$ include
alternative models that are spread out in the quadrant $r>1$, $\delta \geq 0$, and very far from
the null. Therefore, our overall conclusion is to recommend the Bayes factor $B_{p0}^{IP}$.

\begin{appendices}

\section{Proof of Lemma \ref{LBnLarge}} \label{secA1}

\begin{enumerate}[i)]
\item For $g \geq 0$ and $n \geq 1$ we have that $\left( 1+\frac{g}{n} \right)^{-3/2} \geq (1+g)^{-3/2}$ and then we can write
$$
\arraycolsep 2pt
\begin{array}{lll}
B_{p0}^{L}(\mathcal{B}_{p0}) & \geq & \displaystyle{\dfrac{1}{2n} \, \int_{0}^{\infty}
\frac{(1+g)^{(n-p-1)/2}}{(1+g\mathcal{B}_{p0})^{(n-1)/2}} \ (1+g)^{-3/2} dg} \\
\\ & = & \displaystyle{\frac{1}{2n} \, \mathcal{B}_{p0}^{-(n-1)/2} \ \int_{0}^{\infty} (1+g)^{-(p+3)/2} \left(
\negthinspace 1 + \frac{\mathcal{B}_{p0}-1}{1+g\mathcal{B}_{p0}} \right)^{\negthinspace (n-1)/2}
dg}.
\end{array}
$$
Changing the variable $g$ to $y=g/n$ we have
$$
\arraycolsep 2pt
\begin{array}{lll}
B_{p0}^{L}(\mathcal{B}_{p0}) & \geq & \displaystyle{\frac{1}{2} \, \mathcal{B}_{p0}^{-(n-1)/2}
\int_{0}^{\infty} (1+ny)^{-(p+3)/2} \left( 1 + \frac{\mathcal{B}_{p0}-1}{1 + ny \mathcal{B}_{p0}}
\right)^{\negthinspace (n-1)/2} dy} \\
\\ & = & \displaystyle{\frac{1}{2} \, n^{-(p+3)/2} \, \mathcal{B}_{p0}^{-(n-1)/2} \negthinspace
\int_{0}^{\infty} \negthinspace \left( \negthinspace y + \frac{1}{n} \right)^{\negthinspace
\negthinspace -(p+3)/2} \left( \negthinspace 1 + \frac{\mathcal{B}_{p0}-1}{1 + ny\mathcal{B}_{p0}}
\right)^{\negthinspace (n-1)/2} dy}.
\end{array}
$$
For large $n$ we have that the integrand in the last expression can be approximated by
$$
y^{-(p+3)/2} exp\left(-\frac{1 - \mathcal{B}_{p0}}{2 y \mathcal{B}_{p0}}\right)
$$
and hence
$$
B_{p0}^{L}(\mathcal{B}_{p0}) \geq \frac{1}{2} \, n^{-(p+3)/2} \, \mathcal{B}_{p0}^{-(n-1)/2}
\int_{0}^{\infty} y^{-(p+3)/2} exp \negthinspace \left( -\frac{1 - \mathcal{B}_{p0}}{2 y
\mathcal{B}_{p0}} \right) dy.
$$
Therefore, we have
$$
B_{p0}^{L}(\mathcal{B}_{p0}) \geq \frac{1}{2} \, n^{-(p+3)/2} \, \mathcal{B}_{p0}^{-(n-1)/2}
\left( \frac{1-\mathcal{B}_{p0}}{2 \mathcal{B}_{p0}} \right)^{\negthinspace -(p+1)/2} \Gamma
\negthinspace \left( \tfrac{p+1}{2} \right)
$$
and i) is proved. \medskip

\item The Bayes factor $B_{p0}^{B}(\mathcal{B}_{p0})$ can be written as
$$
B_{p0}^{B}(\mathcal{B}_{p0}) = \frac{1}{2} \left( \frac{1+n}{1+p} \right)^{\negthinspace 1/2}
\mathcal{B}_{p0}^{-(n-1)/2} \int_{\frac{1+n}{1+p} - 1}^{\infty} (1+g)^{-(p+3)/2} \left(
\negthinspace 1 + \frac{\mathcal{B}_{p0}-1}{1+g\mathcal{B}_{p0}} \right)^{\negthinspace (n-1)/2}
dg.
$$
Changing the variable $g$ to $y=g/n$ we have
$$
B_{p0}^{B}(\mathcal{B}_{p0}) = \frac{1}{2} \left( \frac{1+n}{1+p} \right)^{\negthinspace 1/2} n \
\mathcal{B}_{p0}^{-(n-1)/2} \negthinspace \int_{\frac{1-p/n}{1+p}}^{\infty} (1 + ny)^{-(p+3)/2}
\left( \negthinspace 1 + \frac{\mathcal{B}_{p0} - 1}{1 + n y \mathcal{B}_{p0}} \negthinspace
\right)^{\negthinspace (n-1)/2} \negthinspace dy.
$$
For large $n$ it follows
$$
\begin{array}{lll}
B_{p0}^{B}(\mathcal{B}_{p0}) & \approx & \dfrac{1}{2} \, \dfrac{n^{-p/2}}{(p+1)^{1/2}} \,
\mathcal{B}_{p0}^{-(n-1)/2} \displaystyle{\int_{\frac{1}{p+1}}^{\infty} \negthinspace y^{-(p+3)/2}
exp \negthinspace \left( - \frac{1 - \mathcal{B}_{p0}}{2 y \mathcal{B}_{p0}} \right) dy} \\ \\
& = & \dfrac{1}{2} \, \dfrac{n^{-p/2}}{(p+1)^{1/2}} \, \mathcal{B}_{p0}^{-(n-1)/2}
\displaystyle{\int_{0}^{p+1} z^{(p+3)/2 - 2} \ exp \left( -\frac{1 - \mathcal{B}_{p0}}{2
\mathcal{B}_{p0}} \thinspace z \right) dz}
\\ \\ & = & \dfrac{1}{2} \, \dfrac{n^{-p/2}}{(p+1)^{1/2}} \, \mathcal{B}_{p0}^{-(n-1)/2}
\left(\dfrac{1-\mathcal{B}_{p0}}{2 \mathcal{B}_{p0}} \right)^{-(p+1)/2} \gamma \negthinspace
\left( \frac{p+1}{2}, \frac{1-\mathcal{B}_{p0}}{\mathcal{B}_{p0}} \frac{p+1}{2} \right).
\end{array}
$$

\noindent This proves ii) and completes the proof of Lemma \ref{LBnLarge}.
\end{enumerate}

\section{Proof of Theorem \ref{asymptoticLRM0b0}} \label{secA2}

\begin{enumerate}[i)]
\item To prove the inconsistency of $B_{p0}^{L}(\mathcal{B}_{p0})$ under the null $M_{0}$ we use part i) in
Lemmas \ref{AdistrBp0} and \ref{LBnLarge} and we can write
$$
\begin{array}{lll}
\displaystyle{\lim_{n \rightarrow \infty} B_{p0}^{L}(\mathcal{B}_{p0})} & \geq &
\displaystyle{\lim_{n \rightarrow \infty} \ \frac{1}{2} \, n^{-(p+3)/2} \,
\mathcal{B}_{p0}^{-(n-1)/2} \left( \dfrac{1-\mathcal{B}_{p0}}{2 \mathcal{B}_{p0}} \right)^{
-(p+1)/2} \Gamma \negthinspace \left( (p+1)/2 \right)}
\\ \\ & = & 2^{(p-1)/2} \, \Gamma \negthinspace \left( (p+1)/2 \right) \, \displaystyle{\lim_{n \rightarrow
\infty} \ n^{-(p+3)/2} (1 - \mathcal{B}_{p0})^{-(p+1)/2}}
\\ \\ & = & \left\{ \begin{array}{ll}
1, & \hbox{ for $p=1$, $[P_{M_{0}}]$,} \smallskip \\
\infty, & \hbox{ for $p>1$, $[P_{M_{0}}]$,}
\end{array}
\right.
\end{array}
$$
where the last equality follows from the fact that $1 - \mathcal{B}_{p0}$ converges to zero in
probability $[P_{M_0}]$ at rate $O(n^{-2})$. This proves i). \medskip

\item For any prior in $\mathcal{R}$ the Bayes factor becomes
$$
\begin{array}{lll}
B_{p0}^{R}(\mathcal{B}_{p0}) & = & a d^{a} \displaystyle{\int_{0}^{\infty}
\frac{(1+g)^{(n-p-1)/2}}{(1+g\mathcal{B}_{p0})^{(n-1)/2}} \thinspace (g+d)^{-(a+1)} dg} \\ \\ & =
& a d^{a} \, \mathcal{B}_{p0}^{-(n-1)/2} \displaystyle{\int_{0}^{\infty} \left( 1 +
\frac{\mathcal{B}_{p0}-1}{1 + g\mathcal{B}_{p0}} \right)^{(n-1)/2} (1 + g)^{-p/2}} \,
(g+d)^{-(a+1)} \thinspace dg.
\end{array}
$$
Changing the variable $g$ to $y=g/n$ we have
$$
\begin{array}{lll}
B_{p0}^{R}(\mathcal{B}_{p0}) & = & a d^{a} \, n \, \mathcal{B}_{p0}^{-(n-1)/2} \smallskip \\ & &
\displaystyle{\int_{0}^{\infty} \negthinspace \left( \negthinspace 1 + \frac{\mathcal{B}_{p0}-1}{1
+ ny \mathcal{B}_{p0}} \negthinspace \right)^{\negthinspace (n-1)/2}} (1 + ny)^{-p/2} \, (ny +
d)^{-(a+1)} \thinspace dy \\ \\
& = & a d^{a} \, n^{-(p/2 + a)} \thinspace \mathcal{B}_{p0}^{-(n-1)/2} \smallskip \\ & &
\displaystyle{\int_{0}^{\infty} \negthinspace \left( \negthinspace 1 + \frac{\mathcal{B}_{p0}-1}{1
\negthinspace + \negthinspace ny \mathcal{B}_{p0}} \right)^{\negthinspace\negthinspace (n-1)/2}
\left(\frac{1}{n} + y \right)^{\negthinspace\negthinspace -\frac{p}{2}}} \displaystyle{ \left(y +
\frac{d}{n} \right)^{\negthinspace\negthinspace -(a+1)} dy}
\end{array}
$$
For $n$ big enough we have that the integrand in the last expression can be approximated by
$$
y^{\raisebox{3pt}{$\scriptstyle -\big(\frac{p}{2}+a+1\big)$}} exp\left(-\frac{1 -
\mathcal{B}_{p0}}{2 y \mathcal{B}_{p0}}\right)
$$
and hence
$$
\begin{array}{lll}
B_{p0}^{R}(\mathcal{B}_{p0}) & \approx & a d^{a} \, n^{-(p/2 + a)} \, \mathcal{B}_{p0}^{-(n-1)/2}
\displaystyle{\int_{0}^{\infty} y^{-(p/2 + a + 1)} exp\left(-\frac{1 - \mathcal{B}_{p0}}{2 y
\mathcal{B}_{p0}}\right) dy} \\ \\ & = & a d^{a} \, n^{-(p/2 + a)} \, \mathcal{B}_{p0}^{-(n-1)/2}
\left( \dfrac{1-\mathcal{B}_{p0}}{2 \mathcal{B}_{p0}} \right)^{\negthinspace -(p/2 + a)} \Gamma
\negthinspace \left( p/2 + a \right).
\end{array}
$$
When sampling from the null $M_0$ the limit in probability of $B_{p0}^{R}(\mathcal{B}_{p0})$ turns
out to be
$$
\lim_{n \rightarrow \infty} B_{p0}^{R}(\mathcal{B}_{p0}) = \lim_{n \rightarrow \infty} \big( n
(1-\mathcal{B}_{p0}) \big)^{-(p/2+a)} = \infty, \, [P_{M_0}], \smallskip
$$
where the last equality follows from i) in Lemma \ref{AdistrBp0} and the fact that $1 -
\mathcal{B}_{p0}$ converges to zero in probability $[P_{M_0}]$ at rate $O(n^{-2})$. This proves
ii).
\medskip

\item To prove the consistency of $B_{p0}^{IPH}(\mathcal{B}_{p0})$ we rewrite the Bayes factor as
$$
B_{p0}^{IPH}(\mathcal{B}_{p0}) = \left( \negthinspace 1 + \frac{2n}{p+1}
\right)^{\negthinspace\negthinspace -p/2} \, \mathcal{B}_{p0}^{-(n-1)/2} \left( 1 - \frac{1 -
\mathcal{B}_{p0}}{1 + \frac{2n}{p+1} \, \mathcal{B}_{p0}} \right)^{\negthinspace (n-1)/2}.
$$
For large $n$ we have that
$$
B_{p0}^{IPH}(\mathcal{B}_{p0}) \approx \left( \negthinspace 1 + \frac{2n}{p+1}
\right)^{\negthinspace\negthinspace -p/2} \, \mathcal{B}_{p0}^{-(n-1)/2} \, exp \left( -\frac{1 -
\mathcal{B}_{p0}}{\mathcal{B}_{p0}} \, \frac{p+1}{4} \right). \smallskip
$$
When sampling from the $M_0$ we have from i) in Lemma \ref{AdistrBp0} that
$$
\lim_{n \rightarrow \infty} B_{p0}^{IPH}(\mathcal{B}_{p0}) = \lim_{n \rightarrow \infty} \left( 1
+ 2 n^{1-b} \right)^{\negthinspace - n^b/2} \, exp\left( - n^{b-2}/4 \right) = 0, \, [P_{M_0}].
$$
This proves the consistency under $M_{0}$. \medskip

\noindent When sampling from the alternative $M_p$ it follows from i) in Lemma \ref{AdistrBp0}
that
$$
\lim_{n \rightarrow \infty} B_{p0}^{IPH}(\mathcal{B}_{p0}) = \lim_{n \rightarrow \infty} \left( 1
+ 2 n^{1-b} \right)^{-n^b/2} (1 + \delta)^{(n-1)/2} \, exp\left(\negthinspace -\delta n^{b}/4
\right) = \infty, \, [P_{M_p}].
$$
This completes the proof of the consistency of $B_{p0}^{IPH}$. \bigskip

\noindent To prove the consistency of $B_{p0}^{B}$ when sampling from the $M_0$ we note that
$$
\lim_{n \rightarrow \infty} \frac{1-\mathcal{B}_{p0}}{\mathcal{B}_{p0}} \ \frac{p+1}{2} = 0, \,
[P_{M_0}],
$$
and hence we have for large $n$ that
$$
\gamma \negthinspace \left( \tfrac{p+1}{2}, \tfrac{1-\mathcal{B}_{p0}}{\mathcal{B}_{p0}} \,
\tfrac{p+1}{2} \right) \approx \frac{2}{p+1} \left( \frac{1-\mathcal{B}_{p0}}{\mathcal{B}_{p0}} \
\frac{p+1}{2} \right)^{(p+1)/2}.
$$
Therefore, from ii) in Lemma \ref{LBnLarge} it follows that
$$
\lim_{n \rightarrow \infty} B_{p0}^{B}(\mathcal{B}_{p0}) = \lim_{n \rightarrow \infty}
(p+1)^{p/2-1} \, n^{-p/2} = 0, \ [P_{M_0}],
$$
so that $B_{p0}^{B}(\mathcal{B}_{p0})$ is consistent under de null model $M_0$. \medskip

\noindent When sampling from the $M_p$, we distinguish the case $b=0$ and $0<b<1$. \smallskip

\noindent For $b=0$, we have
$$
\lim_{n \rightarrow \infty} \frac{1-\mathcal{B}_{p0}}{\mathcal{B}_{p0}} \ \frac{p+1}{2} =
\frac{\delta (p+1)}{2}, \, [P_{M_p}].
$$
Then, for large $n$,
$$
\gamma \negthinspace \left( \tfrac{p+1}{2}, \tfrac{1-\mathcal{B}_{p0}}{\mathcal{B}_{p0}} \,
\tfrac{p+1}{2} \right) \approx \gamma \negthinspace \left( \tfrac{p+1}{2}, \tfrac{\delta (p+1)}{2}
\right), \, [P_{M_p}],
$$
and from ii) in Lemma \ref{LBnLarge} we have that
$$
B_{p0}^{B}(\mathcal{B}_{p0}) \approx A(p,\delta) \, n^{-p/2} \, (1 + \delta)^{(n-1)/2}, \,
[P_{M_p}],
$$
where $A(p,\delta) = 2^{(p-1)/2} \, (p+1)^{-1/2} \, \delta^{-(p+1)/2} \, \gamma \negthinspace
\left( \tfrac{p+1}{2}, \tfrac{\delta (p+1)}{2} \right)$. \medskip

\noindent Therefore,
$$
\lim_{n \rightarrow \infty} \negthinspace B_{p0}^{B}(\mathcal{B}_{p0}) = A(p,\delta) \lim_{n
\rightarrow \infty} \negthinspace n^{-p/2} (1 + \delta)^{(n-1)/2} = \infty, \, [P_{M_p}].
$$
This proves the consistency of $B_{p0}^{B}$ for $b=0$. \medskip

\noindent For $0<b<1$ we have that
$$
\lim_{n \rightarrow \infty} \frac{1-\mathcal{B}_{p0}}{\mathcal{B}_{p0}} \ \frac{p+1}{2} = \infty,
\ [P_{M_p}],
$$
and then for large $n$
$$
\gamma \negthinspace \left(\tfrac{p+1}{2}, \tfrac{1-\mathcal{B}_{p0}}{\mathcal{B}_{p0}}
\tfrac{p+1}{2} \right) \approx \Gamma \negthinspace \left(\tfrac{p+1}{2} \right), \ [P_{M_p}].
$$
Therefore,
$$
\lim_{n \rightarrow \infty} B_{p0}^{B}(\mathcal{B}_{p0}) = \lim_{n \rightarrow \infty} A \, (1 +
\delta)^{(n-1)/2} = \infty, \ [P_{M_p}],
$$
where $A = 2^{(p-1)/2} \, (\delta n)^{-\frac{p}{2}} \ [\delta (p+1)]^{-1/2} \, \Gamma
\negthinspace \left(\tfrac{p+1}{2} \right)$. This completes the proof of the theorem.
\end{enumerate}

\section{Proof of Theorem \ref{asymptoticb1}} \label{secA3}

\begin{enumerate}[i)]
\item The proof of i) has been given in \cite{Moreno2010} and hence it is omitted. \medskip

\item When sampling from model $M_{0}$ we have
$$
\lim_{p\rightarrow \infty }B_{p0}^{IPH}(\mathcal{B}_{p0})=\lim_{p \rightarrow \infty } \left(
\frac{(1+2r)^{r-1}}{(-1+2r)^{r}}\right) ^{p/2}, \text{ }[P_{M_{0}}].
$$
The function of $r$ inside the bracket is smaller than $1$ for $r>1$ and this prove the
consistency under $M_{0}$. \medskip

\noindent When sample from $M_{p}$ and we have that
$$
\lim_{p\rightarrow \infty }B_{p0}^{IPH}(\mathcal{B}_{p0})=\lim_{p \rightarrow \infty} \left(
\frac{(1+2r)^{r-1}}{\left( 1+\frac{2(r-1)}{1+\delta }\right) ^{r}}\right) ^{p/2},[P_{M_{p}}].
$$
Thus, for any $(r,\delta )$ such that
$$
(1+2r)^{r-1}\left( 1+\frac{2(r-1)}{1+\delta }\right) ^{-r}\leq 1
$$
the Bayes factor is inconsistent under $M_{p}$. This proves the assertion. \medskip

\item Using the approximation of $B_{p0}^{ZS}(\mathcal{B}_{p0})$ for large $n$ in Lemma 2 in \cite{Moreno2015},
$$
B_{p0}^{ZS}(\mathcal{B}_{p0}) \approx \left( \frac{n \, exp(1)}{p+1} \right)^{-p/2} \left( \frac{1
- p/n}{1 + \delta_{p0}} \right)^{-(n-p-2)/2}, \smallskip
$$
when sampling from the null $M_{0}$ we have that $\displaystyle{\lim_{n\rightarrow \infty}}
\delta_{p0}=0$ and thus
$$
\lim_{n\rightarrow \infty} B_{p0}^{ZS}(\mathcal{B}_{p0}) = \lim_{p \rightarrow \infty} \ (r \,
exp(1))^{-p/2} \left( 1-\frac{1}{r} \right)^{-(p(r-1))/2} = 0, \ [P_{M_{0}}].
$$
This proves the consistency of $B_{p0}^{ZS}$ under $M_{0}$. \medskip

\noindent When sampling from the alternative $M_{p}$ we have that
$$
\begin{array}{lll}
\displaystyle{\lim_{n \rightarrow \infty} B_{p0}^{ZS}(\mathcal{B}_{p0})} & = &
\displaystyle{\lim_{p \rightarrow \infty} \ (r \, exp(1))^{-p/2} \left( \frac{1 - 1/r}{1 + \delta}
\right)^{-(p(r-1))/2}} \medskip \\ & = & \displaystyle{\lim_{p \rightarrow \infty} \left[
\frac{1}{r \, exp(1)} \left( \frac{1 + \delta}{1 - 1/r} \right)^{r-1} \right]^{p/2}}, \
[P_{M_{p}}].
\end{array}
$$
Therefore $B_{p0}^{ZS}(\mathcal{B}_{p0})$ is inconsistent if and only if
$$
\frac{1}{r \, exp(1)} \left( \frac{1 + \delta}{1-1/r} \right)^{r-1} \leq 1.
$$
This proves iii). \smallskip

\item For large $n$ we have that
$$
B_{p0}^{FS}(\mathcal{B}_{p0}) \approx n^{-p/2} \ \mathcal{B}_{p0}^{-(n-1)/2}
$$
When sampling from the null $M_0$ it follows that
$$
\lim_{n \rightarrow \infty} B_{p0}^{FS}(\mathcal{B}_{p0}) = \lim_{p \rightarrow \infty}
(pr)^{-p/2} (1 - 1/r)^{-pr/2} \lim_{p \rightarrow \infty} \left[ r \left( \negthinspace 1 -
\frac{1}{r} \right)^{\negthinspace r} p \right]^{-p/2} = 0, \ [P_{M_{0}}],
$$
and this proves the consistency under the null. \medskip

\noindent Under the alternative $M_p$ we have that
$$
\begin{array}{lll}
\displaystyle{\lim_{n \rightarrow \infty} B_{p0}^{FS}(\mathcal{B}_{p0})} & = &
\displaystyle{\lim_{p \rightarrow \infty} (pr)^{-p/2} \left[(1 - 1/r) (1 + \delta)^{-1}
\right]^{-pr/2}} \smallskip \\ & = & \displaystyle{\lim_{p \rightarrow \infty} \left[ r \left( 1 -
1/r \right)^{r} \negthinspace (1 + \delta)^{-r} p \right]^{-p/2} = 0}, \ [P_{M_{p}}],
\end{array}
$$
and hence $B_{p0}^{FS}(\mathcal{B}_{p0})$ is inconsistent for any $(r,\delta)$. This proves iv).
\medskip

\item When sampling from the null we have that ${\displaystyle \lim_{n \rightarrow \infty}}
\mathcal{B}_{p0} = 1-\dfrac{1}{r}$, $[P_{M_{0}}]$, and hence ${\displaystyle \lim_{n \rightarrow
\infty}} \dfrac{(1-\mathcal{B}_{p0})p}{2 \mathcal{B}_{p0}} = \infty$, $[P_{M_{0}}]$. Then, for
large $n$,
$$
\gamma \negthinspace \left(\tfrac{p+1}{2}, \tfrac{(1-\mathcal{B}_{p0})}{\mathcal{B}_{p0}}
\tfrac{p+1}{2} \right) \approx \Gamma \negthinspace \left(\tfrac{p+1}{2} \right), \ [P_{M_{0}}],
$$
and from part ii) in Lemma \ref{LBnLarge}
$$
\begin{array}{lll}
B_{p0}^{B}(\mathcal{B}_{p0}) & \approx & \dfrac{1}{2} \dfrac{n^{-p/2}}{p^{1/2}} \,
\mathcal{B}_{p0}^{-(n-1)/2} \left( \dfrac{1-\mathcal{B}_{p0}}{2 \mathcal{B}_{p0}}
\right)^{\negthinspace -(p+1)/2} \Gamma \negthinspace \left(\frac{p+1}{2} \right) \\ \\  & \approx
& \dfrac{1}{2} \dfrac{(p r)^{-p/2}}{p^{1/2}} \, \left( \dfrac{r-1}{r} \right)^{\negthinspace
-(pr-1)/2} \left( \dfrac{1}{2 (r-1)} \right)^{\negthinspace -(p+1)/2} \Gamma \negthinspace
\left(\frac{p+1}{2} \right) \\ \\ & = & A(r) \, 2^{(p-1)/2} \, p^{-(p+1)/2} \left( \dfrac{r}{r-1}
\right)^{\negthinspace (p(r-1))/2} \, \Gamma \negthinspace \left(\tfrac{p+1}{2} \right), \
[P_{M_{0}}],
\end{array}
$$
where $A(r) = r^{-1/2} (r-1) > 0$. \medskip

\noindent Further, from of the Stirling's approximation of the Gamma function, it follows that
$$
\begin{array}{lll}
B_{p0}^{B}(\mathcal{B}_{p0}) & \approx & A(r) \, \pi^{1/2} \, 2^{(p-1)/2} \, p^{-(p+1)/2} \left(
\dfrac{r}{r-1} \right)^{\negthinspace (p(r-1))/2} 2^{-(p-1)/2} \, p^{p/2} \, exp(-p/2) \\ \\ & = &
A(r) \, \pi^{1/2} \, p^{-1/2} \left( \dfrac{r}{r - 1} \right)^{\negthinspace (p(r-1))/2}
\negthinspace exp(-p/2), \ [P_{M_{0}}].
\end{array}
$$
Therefore,
$$
\lim_{n \rightarrow \infty} B_{p0}^{B}(\mathcal{B}_{p0}) = \lim_{p \rightarrow \infty} \left(
\negthinspace \left( \dfrac{r}{r - 1} \right)^{\negthinspace r - 1} exp(-1) \negthinspace
\right)^{\negthinspace p/2} \negthinspace = 0, \ [P_{M_{0}}],
$$
where the last equality follows from \vspace{1pt} $\big( \frac{r}{r-1} \big)^{r-1} exp(-1) < 1$.
This proves the consistency of $B_{p0}^{B}$ under the null model. \medskip

\noindent When sampling from the alternative we have that ${\displaystyle \lim_{n \rightarrow
\infty}} \mathcal{B}_{p0} = \dfrac{r-1}{r(1+\delta)}$, $[P_{M_{p}}]$, and hence ${\displaystyle
\lim_{n \rightarrow \infty}} \dfrac{(1-\mathcal{B}_{p0})p}{2 \mathcal{B}_{p0}} = \infty$,
$[P_{M_{p}}]$. Then, for large $n$,
$$
\gamma \negthinspace \left(\tfrac{p+1}{2}, \tfrac{(1-\mathcal{B}_{p0})p}{2 \mathcal{B}_{p0}}
\right) \approx \Gamma \negthinspace \left(\tfrac{p+1}{2} \right), \ [P_{M_{p}}],
$$
and from part ii) in Lemma \ref{LBnLarge}
$$
\begin{array}{lll}
B_{p0}^{B}(\mathcal{B}_{p0}) & \approx & \dfrac{1}{2} \dfrac{n^{-p/2}}{p^{1/2}} \,
\mathcal{B}_{p0}^{-(n-1)/2} \left( \dfrac{1-\mathcal{B}_{p0}}{2 \mathcal{B}_{p0}}
\right)^{\negthinspace -(p+1)/2} \Gamma \negthinspace \left(\frac{p+1}{2} \right) \\ \\  & \approx
& \dfrac{1}{2} \dfrac{(p r)^{-p/2}}{p^{1/2}} \, \left( \dfrac{r-1}{r(1+\delta)}
\right)^{\negthinspace -(pr-1)/2} \negthinspace \left( \negthinspace \dfrac{1+\delta r}{2 (r-1)}
\negthinspace \right)^{\negthinspace -(p+1)/2} \Gamma \negthinspace \left(\frac{p+1}{2} \right) \\
\\ & = & A(r,\delta) \, 2^{(p-1)/2} \, p^{-(p+1)/2} \left( \dfrac{r}{r - 1} \right)^{\negthinspace
(p(r-1))/2} \negthinspace \left( \dfrac{(1 + \delta)^{r}}{1 + \delta r} \right)^{\negthinspace
p/2} \Gamma \negthinspace \left(\tfrac{p+1}{2} \right), \ [P_{M_{p}}],
\end{array}
$$
where $A(r,\delta) = (r-1) \, \left( r (1+\delta r)(1+\delta) \right)^{-1/2} > 0$.
\medskip

\noindent Further, making use of the Stirling's approximation for the Gamma function, it follows
that
$$
\begin{array}{lll}
B_{p0}^{B}(\mathcal{B}_{p0}) & \approx & A(r,\delta) \, \pi^{1/2} \, 2^{(p-1)/2} \, p^{-(p+1)/2}
\left( \dfrac{r}{r-1} \right)^{(p(r-1))/2} \left( \dfrac{(1+\delta)^r}{1+\delta
r} \right)^{p/2} \smallskip \\ & & 2^{-(p-1)/2} \, p^{p/2} \, exp(-p/2) \\ \\
& = & A(r,\delta) \, \pi^{1/2} \, p^{-1/2} \left( \dfrac{r}{r-1} \right)^{(p(r-1))/2} \, \left(
\dfrac{(1+\delta)^r}{1+\delta r} \right)^{p/2} exp(-p/2), \ [P_{M_{p}}].
\smallskip
\end{array}
$$
So,
$$
\begin{array}{lll}
\lim_{n \rightarrow \infty} B_{p0}^{B}(\mathcal{B}_{p0}) & = & \lim_{p \rightarrow \infty} \left(
\negthinspace \left( \negthinspace \dfrac{r}{r-1} \negthinspace \right)^{\negthinspace r-1}
\frac{(1+\delta)^r}{1+\delta r} \, exp(-1) \negthinspace \right)^{\negthinspace p/2} \\ \\
& = & \left\{ \begin{array}{ll} 0, & \hbox{ if\, $h(r,\delta) \leq 1, \ [P_{M_{p}}]$,} \smallskip \\
\infty, & \hbox{ if\, $h(r,\delta) > 1, \ [P_{M_{p}}]$,} \end{array} \right. \medskip
\end{array}
$$
where $h(r,\delta) = \left( \dfrac{r}{r-1} \right)^{r-1} \dfrac{(1+\delta)^r}{1+\delta r} \,
exp(-1)$. \smallskip This proves v) and completes the proof of the theorem.
\end{enumerate}

\end{appendices}

\bibliography{Bibliography-Article}

\end{document}